\title{The strength of Ramsey Theorem for coloring relatively large sets}
\author{Lorenzo Carlucci \\
  Dipartimento di Informatica, University of Rome I ``La Sapienza''\\
  Rome, Italy\\
  \texttt{carlucci@di.uniroma1.it} \qquad
  \and
  Konrad Zdanowski\\
  University of Cardinal Stefan Wyszy\'nski\\
Warsaw, Poland\\
  \texttt{k.zdanowski@uksw.edu.pl}
}
\date{\today{}}
\newtheorem{definition}{Definition}
\newtheorem{proposition}{Proposition}
\newtheorem{theorem}{Theorem}
\newtheorem{corollary}{Corollary}
\newtheorem{lemma}{Lemma}
\newtheorem{open.problem}{Open Problem}
\newtheorem*{theorem*}{Theorem}
\newtheorem*{corollary*}{Corollary}
\newtheorem*{proposition*}{Proposition*}
\newtheorem*{lemma*}{Lemma}
\newtheorem*{fact*}{Fact}
\newtheorem*{claim*}{Claim}
\newtheorem*{open.problem*}{Open Problem}
\newtheorem*{remark*}{Remark}
\newtheorem*{example*}{Example}
\newtheorem*{exercise*}{Exercise}
\newcommand{\set}[1]{{\{#1\}}}
\newcommand{\pair}[1]{{\langle #1 \rangle}}
\newcommand{\imp}{\Rightarrow}
\newcommand{\pimp}{\Leftrightarrow}
\newcommand{\stops}{{\downarrow}}
\newcommand{\bN}{{\bf N}}
\newcommand{\RT}{\textsc{RT}}
\newcommand{\KM}{\mathsf{KM}}
\newcommand{\vp}{\varphi}
\newcommand{\PA}{\textsc{PA}}
\newcommand{\RCA}{\textsc{RCA}}
\newcommand{\ACA}{\textsc{ACA}}
\newcommand{\ATR}{\textsc{ATR}}
\newcommand{\TI}{\mathbf{TI}}
\newcommand{\card}{\textrm{card}}
\newcommand{\Tr}{\ensuremath{\mathsf{Tr}}\xspace{}}
\newcommand{\isfunc}[3]{{#1}\colon{#2}\rightarrow{#3}}
\newcommand{\goesto}{\rightarrow}
\newcommand{\Nat}{\mathbf{N}}
\begin{document} 

\maketitle

\begin{abstract}
We characterize the computational content and the proof-theoretic strength 
of a Ramsey-type theorem for bi-colorings of so-called {\em exactly large} sets. 
An {\it exactly large} set is a set $X\subset\Nat$ such that $\card(X)=\min(X)+1$. 
The theorem we analyze is as follows. For every infinite subset $M$ of $\Nat$, for every coloring $C$
of the exactly large subsets of $M$ in two colors, there exists and infinite
subset $L$ of $M$ such that $C$ is constant on all exactly large subsets of $L$.
This theorem is essentially due to Pudl\`ak and R\"odl and independently to Farmaki. 
We prove that --- over Computable Mathematics --- this theorem is equivalent to 
closure under the $\omega$ Turing jump (i.e., under arithmetical truth). Natural 
combinatorial theorems at this level of complexity are rare. Our results give a complete
characterization of the theorem from the point of view of Computable Mathematics
and of the Proof Theory of Arithmetic. This nicely extends the current knowledge
about the strength of Ramsey Theorem. We also show that analogous results
hold for a related principle based on the Regressive Ramsey Theorem. 
In addition we give a further characterization in terms of truth predicates over Peano Arithmetic.
We conjecture that analogous results hold for larger ordinals. 
\end{abstract}

\section{Introduction}
A finite set $X\subseteq \Nat$ is {\em large} if $\card(X) > \min(X)$. A finite set $X\subseteq \Nat$
is {\em exactly large} if $\card(X) = \min(X)+1$. The concept of large set was introduced by Paris and Harrington \cite{Par-Har:77}
and is the key ingredient of the famous Paris-Harrington principle, also known as the Large Ramsey Theorem. The latter is the first
example of a natural theorem of finite combinatorics that is unprovable in Peano Arithmetic. We are 
interested in the following extension of the Infinite Ramsey Theorem to bicolorings of exactly large sets.
\begin{theorem}[Pudl\`ak-R\"odl \cite{Pud-Rod:82} and Farmaki \cite{Far:ta,Far-Neg:08}]\label{thm:rtomega}
For every infinite subset $M$ of $\Nat$, 
for every coloring $C$ of the exactly large 
subsets of $\Nat$ in two colors, there exists an infinite 
set $L\subseteq M$ such that every exactly large subset
of $L$ gets the same color by $C$.
\end{theorem}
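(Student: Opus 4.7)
The plan is to iterate the Infinite Ramsey Theorem $\RT^n_2$ for each fixed exponent $n$ and then finish with pigeonhole. The guiding observation is that an exactly large subset $X$ with $\min(X)=m$ is determined by $m$ together with the remaining $m$ elements of $X$ above $m$; so if I can thin $M$ to an infinite set $L_0$ on which the color of every exactly large subset depends only on its minimum, a single additional pigeonhole step will finish the proof.

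I would build simultaneously an increasing sequence $l_0 < l_1 < l_2 < \cdots$ in $M$ and a decreasing chain $M = M_0 \supseteq M_1 \supseteq M_2 \supseteq \cdots$ of infinite subsets of $M$, with $\min(M_{i+1}) > l_i$ at every stage. At stage $i$, set $l_i = \min(M_i)$ and consider the 2-coloring $C_i$ of the $l_i$-element subsets of $M_i \setminus \{l_i\}$ defined by $C_i(Y) = C(\{l_i\} \cup Y)$. Since $\card(\{l_i\} \cup Y) = l_i + 1 = \min(\{l_i\} \cup Y) + 1$, the set $\{l_i\} \cup Y$ is exactly large with minimum $l_i$, so $C_i$ is well-defined. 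Applying $\RT^{l_i}_2$ to $C_i$ yields an infinite homogeneous $M_{i+1} \subseteq M_i \setminus \{l_i\}$ on which $C_i$ is constantly equal to some $c_i \in \{0,1\}$.

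Set $L_0 = \{l_i : i \in \Nat\}$. Any exactly large $X \subseteq L_0$ with $\min(X) = l_i$ has its remaining $l_i$ elements in $M_{i+1}$, so by construction $C(X) = c_i$. The assignment $i \mapsto c_i$ is a 2-coloring of $\Nat$, so by the pigeonhole principle there is an infinite $I \subseteq \Nat$ on which $c_i$ is constantly equal to some $c \in \{0,1\}$. Then $L = \{l_i : i \in I\}$ is an infinite subset of $M$ all of whose exactly large subsets receive color $c$, as required.

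The combinatorial content of this argument is light, and the main obstacle is not logical but complexity-theoretic: the construction applies $\RT^n_2$ for unbounded $n$, and each application pushes the witness $M_{i+1}$ one level higher in the arithmetical hierarchy relative to $C$. Consequently the whole construction cannot be carried out below the $\omega$-jump of $C$, and a naive formalization will run past any fixed $\ACA_0^{(n)}$. Pinning down this exact jump behavior, rather than proving the theorem itself, is where the real work of the paper lies.
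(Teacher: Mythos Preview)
Your proof is correct and is essentially the same as the paper's own argument: both decompose exactly large sets by their minimum, iterate $\RT^{l_i}_2$ to produce a nested sequence of infinite sets whose minima form a set on which the color depends only on the minimum, and then finish with the infinite pigeonhole principle. Your closing remark about the complexity-theoretic obstacle also matches the paper's observation that the construction ostensibly uses $\Sigma^1_1$-induction.
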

We refer to the statement of the above Theorem as $\RT(!\omega)$ (the `!' is mnemonic for `exactly', 
while the reason for the use of `$\omega$' is that large sets are also known as `$\omega$-large sets'). 
By an {\it instance} of $\RT(!\omega)$ we indicate a pair $(M,C)$ of the appropriate type. 
Theorem \ref{thm:rtomega} --- with slightly different formulations --- has been essentially proved by 
Pudl\`ak and R\"odl \cite{Pud-Rod:82} and independently by Farmaki \cite{Far:ta,Far-Neg:08}. 
Pudl\`ak and R\"odl's version is stated in terms of `uniform families'. Farmaki's version is 
in terms of Schreier families. Schreier families, originally defined in \cite{Sch:30}, 
play an important role in the theory of Banach spaces. 
The notion has been generalized to countable ordinals in \cite{Als-Ode:88,Als-Arg:92,Tom:96}.
In fact, both \cite{Pud-Rod:82} and \cite{Far:ta} prove a generalization of the above theorem 
to any countable ordinal (see {\it infra\/} for more details). 
As observed in \cite{Far-Neg:08}, Schreier families turn out to essentially coincide with 
the concept of exactly large set. The classical Schreier family is defined as follows
$$ \{ s = \{ n_1,\dots,n_k\} \subseteq \Nat \;:\; n_1 < \dots < n_k \mbox{ and } n_1 \geq k\},$$
while the `thin Schreier family' $\mathcal{A}_\omega$ is defined by imposing $n_1=k$ (see, e.g., \cite{Far-Neg:08}). 
Thus, the Schreier family $\mathcal{A}_\omega$ is just an inessential variant of the family of exactly large subsets of $\Nat$. 

In the present paper we investigate the computational and proof-theoretical content of $\RT(!\omega)$. 
That is, we characterize the complexity of homogeneous sets witnessing the truth of computable instances 
of $\RT(!\omega)$ and we characterize the theorem in terms of formal systems of arithmetic (in 
the spirit of Reverse Mathematics \cite{Sim}). 

In particular, we show that there are computable colorings of the exactly large subsets of $\Nat$
in two colors all of whose homogeneous sets compute the Turing degree $0^{(\omega)}$.
The degree $0^{(\omega)}$ is well-known to be the degree of arithmetical truth, i.e., 
of the first-order theory of the structure $(\Nat,+,\times)$ (see, e.g., 
\cite{Rog}). We show also a reversal of these results by proving that a solution to an instance of $\RT(!\omega)$
can always be found within the $\omega$th Turing jump of the instance. 

Our proofs are such that we obtain as corollaries of the just described computability results the
following proof-theoretical results. First, we show that --- over Computable Mathematics --- 
$\RT(!\omega)$ implies closure under the $\omega$-jump
(or, equivalenty, under arithmetic truth): in terms of Reverse 
Mathematics, we prove that $\RT(!\omega)$ implies --- over $\RCA_0$ --- the axiom stating the existence 
of the $\omega$th Turing jump of $X$ for every set $X$. As a reversal we
obtain that $\RT(!\omega)$ is provable in Computable Mathematics ($\RCA_0$)
augmented by closure under the $\omega$ Turing jump. The system obtained from $\RCA_0$ by adding the axiom 
stating the closure under the $\omega$ Turing jump is denoted in the literature 
as $\ACA_0^+$. 

By analogy with $\RT(!\omega)$ we formulate and prove a version 
of Kanamori-McAloon's Regressive Ramsey Theorem \cite{Kan-McA:87} for regressive
colorings of exactly large sets and study its effective content. We prove analogous 
results as for $\RT(!\omega)$. 

In addition, we present a natural 
characterization of $\RT(!\omega)$ in terms of truth predicates over Peano Arithmetic. 

We believe that our results are interesting from the point of view of Computable Mathematics 
and of the Proof Theory of Arithmetic. By Computable Mathematics we here mean 
the task of measuring the computational complexity of solutions of computable 
instances of combinatorial problems. 
We give a complete characterization of the strength of $\RT(!\omega)$ in terms
of Computability Theory. Our results also yield a characterization of $\RT(!\omega)$
in terms of proof-theoretic strength as measured by equivalence to subsystems of second order
arithmetic, in the spirit of Reverse Mathematics. 
Ramsey's Theorem has been intensively studied from both the viewpoint of Computable
Mathematics and of the Proof Theory of Arithmetic, 
and our characterizations nicely extend the known relations between Ramsey Theorem 
for coloring finite hypergraphs and the finite Turing jump.
On the other hand, natural combinatorial theorems at the level of first-order arithmetical
truth are not common. Our results show that going from colorings of sets of a fixed 
finite cardinality to colorings of large sets correspondingly boosts 
the complexity of a coloring principle from hardness with respect to fixed levels 
of the arithmetical hierarchy to hardness with respect to the whole hierarchy. 
Thus, moving from finite dimensions to exactly large sets acts as a uniform 
transfer principle corresponding to the move from the finite Turing jumps to the 
$\omega$ Turing jump. It might be the case that a similar effect can be obtained 
in other computationally more tame contexts. 
We note that some natural isomorphism problems for computationally tame structures 
(e.g., the isomorphism problem for automatic graphs and for automatic 
linear orders) have been recently characterized 
as being at least as hard as $0^{(\omega)}$ (see \cite{Kus-Liu:LICS10}). 
Our results might have interesting connections with this line of research to the extent
that graph isomorphism can be related to homogeneity. 

\section{$\RT(!\omega)$ and Ramsey Theorem}
We first give a combinatorial proof of $\RT(!\omega)$ featuring an 
infinite iteration of the finite Ramsey Theorem. This proof will be used
as a model for our upper bound proof in Section \ref{sec:main}. 
We then recall what is known about the effective content of Ramsey Theorem 
and establish the easy fact that $\RT(!\omega)$ implies Ramsey Theorem for all finite exponents. 
We denote by $[X]^{!\omega}$ the set of exactly large subsets of $X$. For the rest
we follow standard partition-calculus notation from combinatorics.


\begin{proof}[Proof of Theorem \ref{thm:rtomega}]
Let $M$  be an infinite subset of $\Nat$, let $C:[\Nat\setminus\{1,\dots,a\}]^{!\omega}\to 2$. 
We build an infinite homogeneous subset $L\subseteq M$ for $C$ in stages. 
We keep in mind the fact that the family of all exactly large subsets of $M$ 
can be decomposed based on the minimum element of the set, in the sense 
that $S\in [\Nat]^{!\omega}$ if and only if $S=\{s_1,s_2,\dots,s_m\}$
and $\{s_2,\dots,s_m\}\in [\Nat - \{1,\dots,s_1\}]^{s_1}$. 

Let  $\isfunc{C_a}{[\Nat]^{a}}{2}$
be defined as $C_a(x_1,\dots,x_a)=C(a,x_1,\dots,x_a)$.
We define a sequence $\set{(a_i,X_i)}_{i\in\Nat}$ such that
\begin{itemize}
\item
$a_0= \min(M)$,
\item
$X_{i+1}\subseteq X_i\subseteq M$,
\item
$X_i$ is an infinite and $C_{a_i}$--homogeneous   and $a_i<\min (X_i)$,
\item
$a_{i+1}=\min X_i$.
\end{itemize}
At the $i$-th step of the construction we use Ramsey Theorem for coloring 
$a_i$--tuples from the infinite set $X_{i-1}$ (where $X_{-1}=M$).
We finally apply Ramsey Theorem for coloring singletons in two colors (i.e., the 
Infinite Pigeonhole Principle) to the sequence $\set{a_i}_{i\in\Nat}$ to 
get an infinite $C$--homogeneous set.  
\end{proof}

Note that the above proof ostensibly uses induction on $\Sigma^1_1$-formulas. 
We will show below how to transform the above proof into a proof using only induction on arithmetical 
formulas with second order parameters.

\medskip
We now recall what is known about the computational content of Ramsey Theorem and establish a first, easy
comparison with $\RT(!\omega)$. For $n\in\Nat$, we denote by $\RT^n$ the standard Ramsey Theorem for 
colorings of $n$-tuples in two colors, i.e., the assertion that every coloring $C$ of 
$[\Nat]^n$ in two colors admits an infinite homogeneous set. 
With a notable exception, the status of Ramsey's Theorem with respect to 
computational content is well-known, as summarized in the following theorems. 

\begin{theorem}[Jockusch, \cite{Joc:72}]\label{thm:jockusch}$\,$
\begin{enumerate}
\item For each $n\geq 2$ there exists a computable coloring $C:[\Nat]^n\to 2$ admitting 
no infinite homogeneous set in $\Sigma_n^0$.  
\item For each $n$, for each computable coloring $C:[\Nat]^n\to 2$, there exists
an infinite $C$-homogeneous set in $\Pi_n^0$.
\item For each $n\geq 2$ there exists a computable coloring $C:[\Nat]^n\to 2$ all of whose
homogeneous sets compute $0^{(n-2)}$. 
\end{enumerate}
\end{theorem}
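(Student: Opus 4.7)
The plan is to handle the three parts by different methods. For part~2 (the upper bound) I would proceed by induction on $n$, following the inductive structure of the proof of Theorem~\ref{thm:rtomega} and carefully tracking arithmetical complexity at each step. The base $n=1$ is immediate: for any computable $c:\Nat\to 2$, both color preimages $\{x:c(x)=i\}$ are computable and at least one is infinite, yielding a computable (hence $\Pi_1^0$) homogeneous set. For the inductive step with computable $C:[\Nat]^n\to 2$, I would build a pre-homogeneous sequence $a_0<a_1<\dots$ such that the color $C(a_{i_1},\dots,a_{i_n})$ depends only on the first $n-1$ indices, using at each round the $\Pi_{n-1}^0$ bound from the induction hypothesis applied to the subcoloring $C_{a_i}(\vec y)=C(a_i,\vec y)$ of $(n-1)$-tuples, which is computable in $a_i$. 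A final application of the Infinite Pigeonhole Principle to the induced $(n-1)$-coloring of the $a_i$'s yields a $\Pi_n^0$ infinite homogeneous set for $C$, picking up exactly one extra quantifier at the outer step.

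For part~1, I would carry out a priority-style diagonalization: enumerate uniformly a sequence $(W_e)_{e\in\Nat}$ of $\Sigma_n^0$ sets and construct a computable $C$ in stages so as to meet, for each $e$, the requirement that if $W_e$ is infinite then $C$ attains both colors on some $n$-tuples from $W_e$. Each such requirement describes a $\Pi_n^0$ event, matching the complexity of $W_e$, and the priority management reconciles conflicts between the simultaneous non-homogeneity requirements while keeping the coloring computable.

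For part~3, the strategy is to encode $0^{(n-2)}$ into a computable $C:[\Nat]^n\to 2$ so that on any infinite homogeneous set $H$ the monochromatic color of tuples from $H$ reveals bits of $0^{(n-2)}$. The construction arranges for the least element of a tuple to serve as an index into a standard enumeration of the $(n-2)$-th jump, with the remaining $n-1$ elements providing enough ``room'' to drive the nested approximations. Homogeneity then forces the coded values to stabilize along $H$, enabling a uniform algorithm that computes $0^{(n-2)}$ from any enumeration of $H$.

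The main obstacle throughout is the tight arithmetical bookkeeping. For part~2 the induction must absorb exactly one quantifier per dimension, which requires exploiting that $C_{a_i}$ is computable in $a_i$ rather than incurring spurious jumps. For parts~1 and~3 the coloring must itself remain $\Delta_1^0$ while simultaneously carrying $\Sigma_n^0$-level diagonalization targets or $0^{(n-2)}$-level coded information; Jockusch's priority and coding arguments achieve precisely this balance, and my plan is to follow his approach.
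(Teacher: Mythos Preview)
First, note that the paper does not supply its own proof of this theorem: it is quoted as Jockusch's result, with the remark that parts (1), (2), (3) are Theorems~5.1, 5.5, 5.7 of \cite{Joc:72}. The only related in-paper argument is Lemma~\ref{lem:upperbound}, which uses the Erd\H{o}s--Rado tree but is content with a $(2a)$-jump bound rather than the sharp $\Pi_n^0$ bound.

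Your plan for part~2 has a genuine gap. You propose to build the sequence $\{a_i\}$ by applying, at round~$i$, the inductive $\Pi_{n-1}^0$ bound to $C_{a_i}$. But the inductive hypothesis, as stated, produces a $\Pi_{n-1}^0$ homogeneous subset of $\Nat$; here you need a homogeneous subset of the current set $X_{i-1}$. Relativizing, $C_{a_i}$ restricted to $[X_{i-1}]^{n-1}$ is computable only in $X_{i-1}$, so $X_i$ is $\Pi_{n-1}^0(X_{i-1})$, and the complexity of the $X_i$'s climbs without bound through the arithmetical hierarchy. The resulting sequence $\{a_i\}$ is not $\Pi_n^0$. (This is precisely why the paper observes, after its proof of Theorem~\ref{thm:rtomega}, that the argument ``ostensibly uses induction on $\Sigma^1_1$-formulas''.) Your final step is also miscast: if the color of an $n$-tuple depends on its first $n-1$ indices, the induced coloring is of $(n-1)$-tuples and you need Ramsey at exponent $n-1$, not the Pigeonhole Principle; if instead you intended the stronger reduction of Theorem~\ref{thm:rtomega} (color depends only on the first index), then pigeonhole is fine but the complexity blow-up above is unavoidable. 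Jockusch's actual route to $\Pi_n^0$ builds the Erd\H{o}s--Rado tree for $C$ \emph{once}: the tree is computable, a suitable infinite path is of low arithmetical complexity and is pre-homogeneous (color depends on the first $n-1$ entries), and one then applies the induction hypothesis a single time to the induced $(n-1)$-coloring along that path.

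For parts~1 and~3 your outline is at the level of intention rather than argument. In particular, Jockusch's construction for part~1 is not a priority argument in the usual sense but an inductive limit construction (start from the $n=2$ case and lift via Shoenfield's Limit Lemma, as the paper itself does in Lemma~\ref{lem:Joc0} and Theorem~\ref{thm:sequence}); your description does not indicate how a computable coloring can act on $\Sigma_n^0$ requirements for $n\geq 2$. For part~3 the coding idea is correct in spirit, but the entire content lies in the iterated-limit bookkeeping that keeps $C$ computable while forcing stabilization, and that is absent from your sketch.
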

Points (1), (2), (3) of the above Theorem are Theorem 5.1, Theorem 5.5 and Theorem 5.7 in \cite{Joc:72}, respectively.
Essentially drawing on the above results, Simpson proved the following Theorem (Theorem III.7.6 in \cite{Sim}). 
\begin{theorem}[Simpson, \cite{Sim}]
The following are equivalent over $\RCA_0$.
\begin{enumerate}
\item $\RT^3$,
\item $\RT^n$ for any $n\in\Nat$, $n\geq 3$,
\item $\forall X\exists Y (Y = X')$.
\end{enumerate}
\end{theorem}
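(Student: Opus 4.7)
The plan is to establish $(2) \Rightarrow (1)$, $(1) \Rightarrow (3)$, and $(3) \Rightarrow (2)$, which together give the equivalence. The first implication is immediate because $(1)$ is the instance $n = 3$ of $(2)$, so the real work lies in the two remaining directions, with the essential difficulty concentrated in $(1) \Rightarrow (3)$.

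For $(1) \Rightarrow (3)$, I would formalize inside $\RCA_0$ the coloring used to prove Theorem \ref{thm:jockusch}(3) for $n = 3$, relativized to an arbitrary set $X$. First, invoke Shoenfield's limit lemma (provable in $\RCA_0$) to extract from $X$ an $X$-computable function $f : \Nat \times \Nat \to 2$ with $\lim_s f(a,s) = \chi_{X'}(a)$ for every $a$. Then define the $X$-computable coloring $C : [\Nat]^3 \to 2$ by setting $C(a,b,c) = 1$ iff $f(a,b) = f(a,c)$, and apply $\RT^3$ with $X$ as a set parameter to obtain an infinite homogeneous set $H$. A short finiteness argument rules out color $0$: a $0$-homogeneous infinite $H$ would force $f(a,\cdot)$ to take a distinct value at every element of $H$ above $a$, contradicting the fact that its range is $\{0,1\}$. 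Hence $H$ is $1$-homogeneous, and for each $a \in H$ the value $f(a,b)$ is constant as $b$ ranges over $H \cap (a, \infty)$, equal to $\chi_{X'}(a)$. This yields a $\Delta^0_1(X \oplus H)$ definition of $X'$, so $X'$ exists by recursive comprehension.

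For $(3) \Rightarrow (2)$, I would argue by external induction on $n \geq 3$ that $\ACA_0$ (which is $\RCA_0$ together with $(3)$) proves $\RT^n$, by formalizing the classical Ramsey recursion. Given $C : [\Nat]^{n+1} \to 2$ and the induction hypothesis, build a descending sequence of infinite sets $X_0 \supseteq X_1 \supseteq \dots$, each arithmetic in $C$, by setting $a_i = \min(X_i)$ and choosing $X_{i+1} \subseteq X_i$ infinite and homogeneous for the $n$-ary coloring $C_{a_i}(\sigma) = C(\{a_i\} \cup \sigma)$; a final application of the infinite pigeonhole principle to the sequence of colors realized at the $a_i$ yields an infinite $C$-homogeneous set. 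The construction stays inside arithmetical comprehension at each finite stage, which is exactly what $(3)$ provides.

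The main obstacle is the formalization inside $\RCA_0$ required for $(1) \Rightarrow (3)$: the coloring and the limit approximation must be defined explicitly enough that $\RCA_0$ recognizes them as $X$-computable, and the homogeneity analysis must be carried out so that $X'$ is extracted via recursive rather than merely arithmetical comprehension. The $(3) \Rightarrow (2)$ direction is conceptually smoother, but one must still check that the inductive use of $\RT^n$ admits set parameters and that the recursive construction of the $X_i$ can be carried out uniformly within $\ACA_0$.
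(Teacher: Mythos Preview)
The paper does not itself prove this theorem; it is cited from Simpson (Theorem III.7.6 in \cite{Sim}), so there is no in-paper proof to compare against. Your proposal, however, has a genuine gap in the $(1)\Rightarrow(3)$ direction. Your coloring $C(a,b,c)=1 \Leftrightarrow f(a,b)=f(a,c)$ encodes only the approximation to $\chi_{X'}$ at the \emph{first} coordinate $a$, so $1$-homogeneity of $H$ tells you merely that $b\mapsto f(a,b)$ is constant on $H\cap(a,\infty)$ \emph{for $a\in H$}. You therefore recover $X'\cap H$ computably from $X\oplus H$, not $X'$, and recursive comprehension does not deliver $X'$ as a set. The standard repair---visible in the paper's own coloring $C_2$ in the proof of Theorem~\ref{thm:good}---is to quantify over all indices up to the first coordinate: take $C(a,b,c)=1$ iff $\forall e\le a\,(f(e,b)=f(e,c))$. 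Then $1$-homogeneity of $H$ lets you read off $\chi_{X'}(e)$ for \emph{every} $e$, by choosing any $a\in H$ with $a\ge e$ and any $b\in H$ above $a$.

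Your $(3)\Rightarrow(2)$ sketch also needs more care. Building an infinite nested sequence $X_0\supseteq X_1\supseteq\cdots$ where each $X_{i+1}$ is \emph{some} $\RT^n$-homogeneous set for an induced coloring is not directly available in $\ACA_0$: arithmetical comprehension does not assemble the whole sequence $\langle X_i\rangle_{i\in\Nat}$ from bare existence claims, and without a canonical choice the arithmetical complexity of the $X_i$ need not stay bounded. The usual argument (cf.\ the paper's Lemma~\ref{lem:upperbound}) instead uses the Erd\H{o}s--Rado tree to produce a single pre-homogeneous path that is arithmetical in $C$, reducing $\RT^{n+1}$ to one application of $\RT^n$ on that path; this keeps the entire construction within $\ACA_0$.
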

In (3) above, the expression $\forall X\exists Y (Y = X')$
is a formalization of the assertion that the Turing jump of $X$ exists (and is $Y$). Details 
on how this formalization is carried out in $\RCA_0$ will be presented when needed. 
It is also known that the three statements of the previous Theorem are equivalent to 
the system $\ACA_0$ (i.e., the system obtained by adding to $\RCA_0$ all instances of the comprehension axiom 
for arithmetical formulas). One of the major open problems in the Proof Theory of Arithmetic is whether Ramsey's Theorem for 
colorings of pairs implies the totality of the Ackermann function over $\RCA_0$ (see \cite{See-Sla:95,
Cho-Joc-Sla:01}). 

The strength of the full Ramsey Theorem (with syntactic universal quantification over all exponents)
has been established by McAloon \cite{McA:85}. 
\begin{theorem}[McAloon, \cite{McA:85}]\label{thm:McAloon}
The following are equivalent over $\RCA_0$.
\begin{enumerate}
\item $\forall n \RT^n$,
\item $\forall n \forall X \exists Y(Y=X^{(n)})$.
\end{enumerate}
\end{theorem}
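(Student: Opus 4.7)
The plan is to adapt Simpson's proof of the equivalence $\RT^3 \leftrightarrow \ACA_0$ uniformly in $n$, by extracting the uniformity in $n$ from Jockusch's Theorem \ref{thm:jockusch}. Inside $\RCA_0$ the statement $Y=X^{(n)}$ is formalized as the existence of a coded sequence $\pair{Y_0,\dots,Y_n}$ with $Y_0=X$, $Y_n=Y$, and each $Y_{i+1}$ the Turing jump of $Y_i$ in the standard $\RCA_0$ formalization.

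For the forward direction, I would assume $\forall n\,\RT^n$, fix $n$ and $X$, and aim to produce $X^{(n)}$. The key step is a relativized, uniform-in-$n$ version of Theorem \ref{thm:jockusch}(3): there is a coloring $\isfunc{C_{n,X}}{[\Nat]^{n+2}}{2}$, computable in $X$ uniformly in $n$, such that every infinite $C_{n,X}$-homogeneous set $H$ satisfies $X^{(n)}\leq_{\mathrm T} H\oplus X$ via a reduction whose index depends only on $n$. Applying $\RT^{n+2}$ to $C_{n,X}$ yields such an $H$, and then $\Delta^0_1$-comprehension available in $\RCA_0$ recovers $X^{(n)}$ from $H\oplus X$. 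Since the whole construction is a single arithmetic scheme with $n$ as a free parameter, it delivers statement (2) without any extra induction on $n$.

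For the reverse direction, I would assume $\forall n\forall X\exists Y(Y=X^{(n)})$, fix $n$ and a coloring $\isfunc{C}{[\Nat]^n}{2}$, and aim for an infinite $C$-homogeneous set. Here the idea is to use a uniform-in-$n$ version of Theorem \ref{thm:jockusch}(2): there is an infinite $C$-homogeneous set $H$ whose membership is $\Pi^0_n$-definable from $C$ by a single formula with $n$ as a parameter. Given that $C^{(n)}$ exists by hypothesis, this $\Pi^0_n$-definition becomes $\Delta^0_1$ relative to $C^{(n)}$, so $\Delta^0_1$-comprehension inside $\RCA_0$ produces $H$.

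The main obstacle is verifying that Jockusch's constructions are genuinely uniform in $n$ and can be carried out inside the restricted $\Sigma^0_1$-induction of $\RCA_0$. The lower-bound coloring in part (3) is built by encoding initial segments of the $(n-2)$-th jump of the base set into homogeneous tuples through a nested recursive approximation; one must check that both this encoding and the inverse decoding from $H$ back to $X^{(n)}$ are expressible by a single arithmetic formula with $n$ as a free parameter, and that the indices of the reductions depend computably on $n$. The upper bound in part (2) requires analogous scrutiny of the $\Pi^0_n$-definition of the homogeneous set. Once this uniformity is extracted by inspection of the original arguments, both halves of the equivalence follow from the manipulations above.
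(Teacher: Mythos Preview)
The paper does not supply its own proof of this statement: it is quoted from McAloon \cite{McA:85} as a known background result, with only a remark on how the formalization of $X^{(n)}$ is to be read. So there is no proof in the paper to compare your proposal against.

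Your outline is the standard route and is essentially correct. In fact the paper's later arguments for the $\omega$-case supply exactly the uniform-in-$n$ ingredients you identify as the main obstacle: Lemma \ref{lem:upperbound} is precisely a uniform, relativized version of Jockusch's upper bound (your reverse direction), producing an index $f_a$ for a homogeneous set from an oracle of bounded jump-height depending computably on $a$; and the constructions in Theorem \ref{thm:good} and Proposition \ref{prop:hardcoloring} are uniform, relativized versions of the lower bound (your forward direction), with the colorings $C_n$ and decoding machines $M_n$ built by an explicit recursion on $n$. One point deserves slightly more care than you give it: in the forward direction, a single Turing reduction $X^{(n)}\leq_T H\oplus X$ does not by itself yield the coded sequence $\langle Y_0,\dots,Y_n\rangle$ that your own formalization of $Y=X^{(n)}$ requires. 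The clean fix is the one visible in the proof of Proposition \ref{prop:hardcoloring}: define each $X_i$ by a $\Delta^0_1$-in-$(H\oplus X)$ formula and then verify $X_{i+1}=X_i'$ by a bounded induction on $i\leq n$, all of which is available in $\RCA_0$.
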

In (2) above the expression $\forall n \forall X \exists Y(Y=X^{(n)})$ denotes a formalization of the 
assertion that the $n$-th Turing jump of $X$ exists for all $n$. Details on 
how this formalization is carried out in $\RCA_0$ will be presented when needed.

Our main result --- Theorem \ref{thm:main} below --- is that an analogous relation holds between 
$\RT(!\omega)$ and closure under the $\omega$-jump. 
Theorem \ref{thm:McAloon} establishes the equivalence of $\forall n \RT^n$
with the system $\ACA_0'$ consisting of $\RCA_0$ augmented by an axiom stating that for every 
$n$ and for every set $X$ the $n$-th jump of $X$ exists for all sets $X$. 
As a corollary of our computability-theoretic analysis we will obtain 
that $\RT(!\omega)$ is equivalent to the system $\ACA_0^+$ consisting of $\RCA_0$ augmented by 
an axiom stating that for every set $X$ the $\omega$-jump of $X$ exists. 

The following easy Proposition relates $\RT(!\omega)$ to the standard Ramsey Theorem. 

\begin{proposition}\label{prop:impliesRamsey}
$\RT(!\omega)$ implies $\forall n \RT^n$ over $\RCA_0$.
\end{proposition}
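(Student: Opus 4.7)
The plan is to reduce, uniformly in $n$, an instance of $\RT^n$ to an instance of $\RT(!\omega)$ via a straightforward recoding of the coloring. Given $n\geq 1$ and a coloring $C\colon [\Nat]^n\to 2$, define an auxiliary coloring $C'\colon [\Nat]^{!\omega}\to 2$ by
\[
C'(S) \;=\; \begin{cases} C(\{s_1,s_2,\dots,s_n\}) & \text{if } s_1\geq n-1,\\ 0 & \text{otherwise,}\end{cases}
\]
where $S=\{s_1<s_2<\dots<s_m\}$ is exactly large (so $m=s_1+1$, ensuring $m\geq n$ in the first case). The coloring $C'$ is $\Delta^0_1$ in $(C,n)$, so the construction is uniform in $n$ and the object $C'$ is available in $\RCA_0$.

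Next I would apply $\RT(!\omega)$ to the instance $(M,C')$ with $M=\{n-1,n,n+1,\dots\}$, obtaining an infinite $L\subseteq M$ such that every exactly large subset of $L$ receives the same $C'$-color, call it $c\in\{0,1\}$. I claim that this $L$ is itself $C$-homogeneous for $n$-tuples, with color $c$. Indeed, fix any $T=\{t_1<t_2<\dots<t_n\}\in [L]^n$. Since $L\subseteq M$, we have $t_1\geq n-1$, so an exactly large set with minimum $t_1$ has cardinality $t_1+1\geq n$. Because $L$ is infinite, we may choose $t_1+1-n$ elements $u_1<\dots<u_{t_1+1-n}$ of $L$ strictly above $t_n$; then $S:=T\cup\{u_1,\dots,u_{t_1+1-n}\}$ is an exactly large subset of $L$ whose first $n$ elements are $t_1,\dots,t_n$. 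By the definition of $C'$ we get $C(T)=C'(S)=c$, and since $T$ was arbitrary, $L$ is $C$-homogeneous.

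Combining these two paragraphs, for each fixed $n$ one obtains an infinite $C$-homogeneous set from a single application of $\RT(!\omega)$. Since the whole construction and verification are uniform in $n$ and provable with only $\Delta^0_1$ comprehension and $\Sigma^0_1$ induction, this yields $\forall n\, \RT^n$ over $\RCA_0$. I do not expect any real obstacle: the only delicate point is making sure the padding of $T$ to an exactly large subset of $L$ exists, which is immediate from $L$ being infinite and from the bound $t_1\geq n-1$ built into the choice of $M$.
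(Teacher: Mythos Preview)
Your proof is correct and follows essentially the same approach as the paper: code the $n$-tuple coloring into an exactly-large-set coloring by looking only at the first $n$ elements, apply $\RT(!\omega)$, and then extend any $n$-tuple from the homogeneous set to an exactly large subset. The only cosmetic differences are indexing conventions and that you restrict to $M=[n-1,\infty)$ before applying $\RT(!\omega)$, whereas the paper applies $\RT(!\omega)$ and then intersects the homogeneous set with $[n,\infty)$.
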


\begin{proof}
Let $n\geq 1$ and $C:[\Nat]^n\to 2$ be given. We construct $C':[\Nat]^{!\omega}\to 2$ from $C$ as follows. 
Let $s=\{s_0,\dots,s_m\}$ be an exactly large set (then $m=s_0$). We
set 
$$
C'(s)=\begin{cases}
C(s_0,\dots,s_{n-1}) & \mbox{ if } s_0\geq n,\\
0 & \mbox{ otherwise.}
\end{cases}
$$
Let $H$ be an infinite $C'$-homogeneous set as given by $\RT(!\omega)$. 
Let $i\in \{0,1\}$ be the color of $[H]^{!\omega}$. 
Let $H' = H \cap [n,\infty)$. Let $s\in [H']^n$. Thus $\min(s)\geq n$. 
Let $s'$ be any exactly large set extending $s$ in $H'$. 
Then $C(s)=C'(s')=i$. Thus $H'$ is $C$-homogeneous of color $i$. 
\end{proof}
We will see below that $\RT(!\omega)$ is in fact strictly stronger than $\forall n \RT(n)$. 

\section{$\RT(!\omega)$ and Second Order Arithmetic with $\omega$-jumps}\label{sec:main}

We prove the following Theorem, characterizing the strength of $\RT(!\omega)$ over Computable 
Mathematics. 

\begin{theorem}\label{thm:main}
The following are equivalent over $\RCA_0$.
\begin{enumerate}
\item
$\RT(!\omega)$,
\item
$\forall X \exists Y(Y=X^{(\omega)})$.
\end{enumerate}
\end{theorem}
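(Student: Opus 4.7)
The plan is to prove each direction by transferring a computability-theoretic argument into $\RCA_0$. Both directions use Jockusch's Theorem \ref{thm:jockusch} relativized to an arbitrary set parameter.

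For the direction $(1) \Rightarrow (2)$, I would exploit the strong lower bound in Theorem \ref{thm:jockusch}(3). Fix a uniform relativized sequence of colorings $D^X_n : [\Nat]^n \to 2$, computable in $X$ uniformly in $n$, such that every infinite $D^X_n$-homogeneous set uniformly computes $X^{(n-2)}$. Combine them into a single coloring $C : [\Nat]^{!\omega} \to 2$ of the exactly large subsets of $\Nat$ by
\[
C(\{s_0 < s_1 < \dots < s_{s_0}\}) = D^X_{s_0}(s_1, \dots, s_{s_0})
\]
for $s_0 \geq 2$, extended arbitrarily for $s_0 < 2$. Applying $\RT(!\omega)$ to the instance $(\Nat, C)$ yields an infinite homogeneous set $L$. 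For each $\ell \in L$ with $\ell \geq 2$, homogeneity of $L$ on exactly large sets with minimum $\ell$ forces the tail $L \cap (\ell, \infty)$ to be $D^X_\ell$-homogeneous, hence to compute $X^{(\ell-2)}$ uniformly in $\ell$. Since $L$ is infinite, $L \oplus X$ uniformly computes $X^{(n)}$ for every $n$, so $X^{(\omega)}$ is $\Delta^0_1$-definable from $L \oplus X$ and exists in $\RCA_0$.

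For $(2) \Rightarrow (1)$, I would formalize the combinatorial proof of Theorem \ref{thm:rtomega} inside $\ACA_0^+$. Given an instance $(M, C)$, the hypothesis provides $Z = (M \oplus C)^{(\omega)}$. Imitating the excerpt's construction, define by primitive recursion a function $F : \Nat \to \Nat$, computable in $Z$, whose $i$-th value encodes $(a_i, c_i)$ together with a natural-number index for the infinite $C_{a_i}$-homogeneous set $X_i$. To keep the recursion arithmetical, I would represent $X_i$ not as an infinite set but as an index of the canonical $\Pi^0_{a_i}$ homogeneous set produced by the uniform form of Jockusch's upper bound, Theorem \ref{thm:jockusch}(2), applied relative to the oracle already coded at the previous stage. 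Since each step of the recursion is $\Delta^0_1$ in $Z$, the function $F$ is definable in $\RCA_0$ using $\Sigma^0_1$-induction with the set parameter $Z$. The desired $C$-homogeneous set is finally extracted by applying the infinite pigeonhole principle (provable in $\RCA_0$) to the sequence of colors $(c_i)_i$ read off from $F$, exactly as in the excerpt's combinatorial proof.

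The main obstacle is in the reverse direction. As the excerpt already observes, the combinatorial proof ostensibly uses $\Sigma^1_1$-induction because it recursively produces infinite sets. The crux of the formalization is the re-coding of each $X_i$ by a finite index of a canonical Jockusch homogeneous set, reducing the recursion to one that is arithmetical with a single second-order parameter $Z = (M \oplus C)^{(\omega)}$, and then verifying that the coded $X_i$ still satisfies the containment and homogeneity properties needed by the pigeonhole step. A secondary technical point, in the forward direction, is checking that the relativized Jockusch lower bound is genuinely uniform in both $n$ and $X$, so that the combined coloring $C$ is uniformly $X$-computable and the reduction producing $X^{(\omega)}$ is $\Delta^0_1$.
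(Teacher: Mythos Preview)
Your proposal is correct and follows essentially the same two-part strategy as the paper: for $(1)\Rightarrow(2)$, glue a uniform family of Jockusch-type hard colorings into a single coloring of exactly large sets and read the jumps off any homogeneous set; for $(2)\Rightarrow(1)$, replace the infinite sets $X_i$ in the combinatorial proof by machine indices computed from the single oracle $(M\oplus C)^{(\omega)}$, thereby downgrading the ostensible $\Sigma^1_1$-induction to an arithmetical one. The only real difference is in the choice of building blocks: where you invoke Theorem~\ref{thm:jockusch}(3) and (2) as black boxes and flag the required uniformity as a side issue, the paper instead supplies its own explicit colorings $C_n$ with accompanying decision machines $M_n$ (Theorem~\ref{thm:good}) and its own oracle-tracked Erd\H{o}s--Rado construction (Lemma~\ref{lem:upperbound}), precisely so that the uniformity of the reductions and the bound on the stacked oracles are visible by construction rather than by appeal to Jockusch's proofs.
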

In (2) above, the expression $\forall X\exists Y (Y = X^{(\omega)})$
is a formalization of the assertion that the $\omega$th Turing jump of $X$ exists. Details 
on how this formalization is carried out in $\RCA_0$ will be presented when needed. 

The implication from $1.$ to $2.$ follows from Theorem \ref{thm:good} below. 
The implication from $2. $ to $1. $ follows from Theorem \ref{thm:upperbound} below. 
The system consisting of $\RCA_0$ plus the axiom $\forall X \exists Y (Y = X^{(\omega)})$ is known as $\ACA_0^+$.
From the viewpoint of Computable Mathematics, the implication from $1. $ to $2. $ 
is essentially based on a purely computability-theoretic result showing that $\RT(!\omega)$
has computable instances all of whose solutions compute $0^{(\omega)}$ (see Theorem
\ref{thm:good} and Proposition \ref{prop:hardcoloring} below). 

\subsection{Lower Bounds}
Our first result is that $\RT(!\omega)$ admits a computable instance 
that does not admit arithmetical solutions. This is obtained by a Shoenfield's Limit
Lemma construction based on the colorings from Jockusch's original proof of Theorem \ref{thm:jockusch} point
(1). Our second main result is that $\RT(!\omega)$ admits a computable instance all of whose solutions 
compute $0^{(\omega)}$. Recall that there exists 
sets that are incomparable with all $0^{(i)}$ with $i\geq 1$ (see, e.g., \cite{Rog}).

We actually prove that $\RT(!\omega)$ implies $\forall X\exists Y(Y=X^{(\omega)})$
over $\RCA_0$. Note that for the hardness result we do {\it not\/} use Jockusch's proof of Theorem \ref{thm:jockusch}
point (3) (i.e., essentially, Lemma 5.9 in \cite{Joc:72}). Instead we provide an explicit construction 
of a family of suitable colorings. The construction mimics some model-theoretic constructions of
indicators for classes of $\Sigma^0_n$ formulas. For a very nice and short introduction
into this method we refer to \cite{Kot:08}.  In addition, we show how to adapt 
the proof of Proposition 4.4 in the recent \cite{Dza-Hir:11} to get a computable instance of $\RT(!\omega)$
all of whose solutions compute all levels of the arithmetical hierarchy. 

We fix the following computability-theoretic notation. Let $\varphi$ be a fixed acceptable numbering \cite{Rog} for
a class of all recursive functions
\footnote{By definition, the {\em acceptable\/} programming systems for a class
are those which contain a universal simulator and into which all other universal programming
systems for the class can be compiled. Acceptable systems are characterized
as universal systems with an algorithmic substitutivity principle called S-m-n
and satisfy self-reference principles such as Recursion Theorems  
\cite{Rog}}.
We write $\{e\}^X(x)=y$ to indicate that the $\varphi$-program with index $e$ and oracle $X$ outputs $y$ on input $x$. 
We write $\{e\}^X(x)\stops$ if there exists a $y$ such that $\{e\}^X(x)=y$. 
Following notation from \cite{Soa} (Definition III 1.7), we write $\{e\}^X_s(x) =y$ if $x,y,e<s$ and $s>0$ and 
a $\varphi$-program with an index $e$ and oracle $X$ outputs $y$ on input $x$ within less than $s$ steps of 
computation and the computation only uses numbers smaller than $s$. We say that such an 
$s$ bounds {\it the use} of the computation. We occasionally 
write $\varphi_{e,s}^X(x) =y$ for $\{e\}^X_s(x) =y$. For the sake of our proof-theoretic results to follow we assume 
to have fixed a formalization of the assertion $\{e\}^X_s(x) =y$. 
We write $\{e\}^X_s(x)\stops$ (or $\varphi_{e,s}(x)\stops$)
if $\exists y(\{e\}^X_s(x) =y)$. $W_{e,s}^X$ denotes the domain of $\{e\}^X_s$. 
A set $X$ is Turing-reducible to a set $Y$ (denoted $X\leq_T Y$) if and only if there exist $i,j$ such that 
$(\forall x)(x\in Y \leftrightarrow \exists s (\{i\}^X_s(x)\stops))$
and $(\forall x)(x\notin Y \leftrightarrow \exists s (\{j\}^X_s(x)\stops))$.
Once a suitable formalization of the assertion $\{e\}^X_s(x) =y$ is fixed, the above
definition of $X\leq_T Y$ can be formalized in Computable Mathematics ($\RCA_0$).
We choose not to distinguish notationally between the real concept and its formalization, and
we define the two at once. We take care of defining the relevant computability-theoretic notions (e.g., the Turing jumps)
in such a way as to make it clear how they formalize in subsystems of second 
order arithmetic.


\bigskip
We first show how to define a computable coloring of exactly large sets such that all 
all homogeneous sets avoid all levels of the Arithmetical Hierarchy. Our first step towards this goal 
is the following relativized version of a result of Jockusch's \cite{Joc:72}.

\begin{lemma}\label{lem:Joc0}
There exists a $X$-computable coloring $\isfunc{e^X}{[\Nat]^2}{\set{0,1}}$ such that 
whenever $X$ is a $\Sigma_i^0$--complete set then $e^X$ has no homogeneous 
set in $\Sigma_{i+2}^0$.
\end{lemma}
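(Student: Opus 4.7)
The plan is to relativize Jockusch's proof of Theorem \ref{thm:jockusch}(1) in the case $n=2$. Jockusch constructs, by a direct diagonal/priority-style argument, a computable coloring $c:[\Nat]^2\to\{0,1\}$ admitting no infinite $\Sigma^0_2$-homogeneous set. Every step of his construction uses only (i) an effective enumeration of $\Sigma^0_2$ sets and (ii) stagewise computable bookkeeping, both of which relativize verbatim to any oracle $X$. Performing the construction with $X$ as an oracle produces an $X$-computable $e^X:[\Nat]^2\to\{0,1\}$ with no infinite $\Sigma^{0,X}_2$-homogeneous set. The lemma then follows from the standard observation that if $X$ is $\Sigma^0_i$-complete then $X\equiv_T 0^{(i)}$, so by the relativized Post theorem $\Sigma^{0,X}_2=\Sigma^0_{i+2}$, and any $\Sigma^0_{i+2}$-homogeneous set would be $\Sigma^{0,X}_2$-homogeneous, contradicting the key property of $e^X$.

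Concretely I would proceed as follows. Fix an $X$-uniform enumeration $(A_e^X)_{e\in\Nat}$ of all $\Sigma^{0,X}_2$ sets, together with $X$-computable approximations $f_e^X(n,s)$ with $n\in A_e^X \Leftrightarrow \liminf_s f_e^X(n,s)=1$. For each $e$, impose the requirement $R_e$: $A_e^X$ is not an infinite $e^X$-homogeneous set. The coloring is built in stages: at stage $s$ we define $e^X(\{a,s\})$ for all $a<s$ by inspecting, for each $e\le a$, the current approximation $f_e^X(\cdot,s)$ to determine which elements of $\{0,\dots,s\}$ appear to lie in $A_e^X$, and choosing $e^X(\{a,s\})$ so as to disagree with the color of the highest-priority still-unsatisfied candidate monochromatic segment coming from $A_e^X$. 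A straightforward injury/verification argument, exactly parallel to Jockusch's, shows that each $R_e$ is eventually met: if $A_e^X$ were infinite and $e^X$-homogeneous, then for cofinitely many $s$ the approximation $f_e^X$ would stabilize correctly on an appropriate initial segment, allowing the construction to force a disagreement.

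The only real obstacle is to verify that the priority construction remains effective when relativized; but this is a routine uniformity check, since the entire argument reduces to computable manipulations of $X$-computable approximations. The final translation from ``no infinite $\Sigma^{0,X}_2$-homogeneous set'' to the stated ``no infinite $\Sigma^0_{i+2}$-homogeneous set'' is immediate from the hypothesis on $X$, and no further work is required.
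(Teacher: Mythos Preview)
Your proposal is correct and follows exactly the same approach as the paper: the paper's entire proof is the single sentence ``A straightforward relativization of Theorem 3.1 of \cite{Joc:72},'' and you have simply fleshed out what that relativization looks like, including the translation from $\Sigma^{0,X}_2$ to $\Sigma^0_{i+2}$ via $X\equiv_T 0^{(i)}$. Your added detail about uniformity in $X$ is in fact needed for the paper's subsequent Theorem~\ref{thm:sequence}, so it is good that you made it explicit.
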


\begin{proof}
A straightforward relativization of Theorem 3.1. of \cite{Joc:72}.
\end{proof}

In our construction below we make use of Shoenfield's Limit Lemma \cite{Sho:59}. This
result is usually stated as follows (see, e.g., \cite{Soa} for a standard textbook treatment). 
If $B$ is computably enumerable in $A$ and $f\leq_T B$ 
then there exists a binary $A$-computable function $h(x,s)$ such that $f(x)=\lim_{s} h(x,s)$, 
for every $x$. In our application below we will have $B=A'$. 
On the other hand, we will need more uniformity, as we now indicate. 
Let $g^X(i,e,s,x)$ be defined as follows. 
$$
g^X(i,e,s,x)=
\begin{cases}
\{e\}_s^{W_{i,s}^X}(x) & \mbox{ if } \{e\}_s^{W_{i,s}^X}(x)\stops,\\
0 & \mbox{ otherwise.}
\end{cases}
$$
For each fixed $X$, $g^X$ is $X$-computable. Let $B$ be computably enumerable in $A$
and let $f$ be computable in $B$. Let $i$ and $e$ be such that 
$B = W_i^A$ and $f=\{e\}^B$. Then 
$$ f(x) = \lim_s g^A(e,i,x,s).$$
In fact, in our application, we will have $B=K_{i+1}$ and $A=K_i$, where
$\set{K_i}_{i\in\Nat}$ is a fixed sequence of sets such that 
$K_0=\emptyset$ and, for each $i\geq 1$, $K_i$ is a $\Sigma^0_i$--complete set. 
For the sake of uniformity of our construction below, we take $K_{i+1}$ to be a halting problem for machines
with oracle $K_{i}$, for $i\geq 0$. So, e.g., $K_1$ is just the halting problem for standard Turing machines.
We fix an index $h$ such that for every $i\geq 0$, $K_{i+1}= W_h^{K_i}$.
In our application of Shoenfield's Limit Lemma to $B=K_{i+1}$ and $A=K_i$, we can thus
get rid of the argument $i$ in $g^X$ by freezing it to $h$ throughout.

\begin{theorem}\label{thm:sequence}
There exists a computable sequence of functions 
$e_n^X\colon [\Nat]^{n+2} \rightarrow \set{0,1}$
such that for any $n\geq 0$, for every $i\in\Nat$, $e^{K_i}_n$ is $K_i$-computable and 
computes a coloring with no homogeneous set in $\Sigma_{i+n+2}^0$.
\end{theorem}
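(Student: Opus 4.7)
The plan is to prove the theorem by induction on $n$, with the base case supplied by Lemma~\ref{lem:Joc0} and the inductive step driven by the uniform version of Shoenfield's Limit Lemma set up in the paragraph preceding the theorem. The whole purpose of introducing the explicit function $g^X(i,e,s,x)$ and of fixing the index $h$ with $K_{j+1}=W_h^{K_j}$ is precisely to enable a uniform-in-$j$ construction, so that a single computable sequence of functionals $\{e_n\}_{n\in\Nat}$ can be produced, rather than an ad hoc $e_n$ for each $n$ in isolation.

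For the base case set $e_0 := e^X$ as given by Lemma~\ref{lem:Joc0}: when $X=K_i$ this is a $K_i$-computable coloring of pairs with no $\Sigma^0_{i+2}$-homogeneous set, matching the bound at $n=0$. For the inductive step, assume $e_n$ has been defined with a fixed index $e_n^\star$ (independent of $j$) such that $e_n^{K_j}$ is $K_j$-computable and admits no $\Sigma^0_{j+n+2}$-homogeneous set, for every $j$. Specializing to $j=i+1$ and applying the limit-lemma identity
\[ e_n^{K_{i+1}}(x_1,\dots,x_{n+2}) \;=\; \lim_{s\to\infty} g^{K_i}\bigl(e_n^\star, h, s, \pair{x_1,\dots,x_{n+2}}\bigr), \]
I would define
\[ e_{n+1}^{K_i}(x_1,\dots,x_{n+2},x_{n+3}) \;:=\; g^{K_i}\bigl(e_n^\star, h, x_{n+3}, \pair{x_1,\dots,x_{n+2}}\bigr), \]
i.e.\ use the largest coordinate of the $(n+3)$-tuple as the stage parameter. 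Since $g^X$ is uniformly $X$-computable, $e_{n+1}^{K_i}$ is $K_i$-computable, with index obtainable uniformly from $e_n^\star$ and $h$.

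To verify the complexity bound, suppose towards a contradiction that some infinite $H$ in $\Sigma^0_{i+(n+1)+2}$ were $e_{n+1}^{K_i}$-homogeneous of color $c$. Fix any $x_1<\cdots<x_{n+2}$ in $H$. The limit equation supplies a threshold $s_0$ beyond which $g^{K_i}(e_n^\star,h,s,\pair{x_1,\dots,x_{n+2}}) = e_n^{K_{i+1}}(x_1,\dots,x_{n+2})$. Because $H$ is infinite it contains some $x_{n+3}>\max(x_{n+2},s_0)$, and homogeneity of $H$ forces the common value to be $c$; hence $e_n^{K_{i+1}}(x_1,\dots,x_{n+2})=c$. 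Thus $H$ is itself $e_n^{K_{i+1}}$-homogeneous and still lies in $\Sigma^0_{(i+1)+n+2}$, contradicting the inductive hypothesis at level $j=i+1$. The shift $(j,n)\mapsto(j-1,n+1)$ leaves $j+n+2$ invariant, which is exactly why the bound propagates.

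The main obstacle I anticipate is not combinatorial but one of bookkeeping uniformity: to obtain a genuinely computable sequence $\{e_n\}_{n\in\Nat}$ one has to propagate indices effectively through the induction, so that the claim can be made simultaneously about all $n$ and all $K_j$. This is handled by observing that $g$ is a single fixed computable functional and $h$ is a fixed numeral, so the passage $e_n^\star\mapsto e_{n+1}^\star$ is effected by one primitive recursive operation independent of $n$; primitive recursion then yields the desired map $n\mapsto e_n^\star$, and with it the computable sequence $\{e_n\}_{n\in\Nat}$ of functionals.
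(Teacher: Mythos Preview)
Your proposal is correct and follows essentially the same approach as the paper: induction on $n$ with Lemma~\ref{lem:Joc0} as the base case and, at the inductive step, defining $e_{n+1}^X$ by using the last coordinate as the stage parameter in the uniform Limit Lemma approximation $g^X$ of $e_n^{K_{i+1}}$, so that any infinite $e_{n+1}^{K_i}$-homogeneous set is automatically $e_n^{K_{i+1}}$-homogeneous. Your verification and uniformity discussion are in fact more explicit than the paper's, which dispatches the homogeneity-transfer step with ``it is easy to see'' and leaves the effective propagation of indices implicit.
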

\begin{proof}
We present a recursive procedure for constructing 
the sequence. For $n=0$ we take the function from Lemma \ref{lem:Joc0}.
Let us assume that we have defined a sequence with the desired properties up through $e^X_n$. 
We show how to compute the machine $\isfunc{e^X_{n+1}}{[N]^{n+3}}{\set{0,1}}$.

To ensure the desired properties of $e_{n+1}^X$ it is enough that 
for each $i\geq 0$ if $X=K_{i}$, any homogeneous set for $e_{n+1}^{K_{i}}$ 
is a homogeneous set for $e_{n}^{K_{i+1}}$. 
Moreover, $e^X_{n+1}$ should be obtained effectively from an index for $e_{n}^X$.

We use the same idea as in Proposition 2.1 of Jockusch' paper \cite{Joc:72}.
We take $g^X(e, x_1,\dots,x_{n+2},s)$ such that 
$$
\lim_{s\goesto\infty}g^{K_i}(e_n,x_1,\dots,x_{n+2},s)=e^{K_{i+1}}_n(x_1,\dots,x_{n+2}).
$$
As observed above, such $g^X$ is a fixed function. 
Now, we define $e^X_{n+1}$ as follows.
$$
e^{X}_{n+1}(x_1,\dots,x_{n+2},s) := g^{X}(e_n,x_1,\dots,x_{n+2},s).
$$
Now, if  $Y$ is an infinite homogeneous set for $e^{K_i}_{n+1}$ colored $0$, then it is easy to see that
any tuple $(x_1,\dots,x_{n+2})\in [Y]^{n+2}$ has to be colored $0$ by $e^{K_{i+1}}_n$ 
(and similarly for $Y$ colored $1$). This concludes the proof. 
\end{proof}

\begin{theorem}\label{thm:noSigmacoloring}
There exists a computable coloring $\isfunc{C}{[\Nat]^{!\omega}}{2}$
such that any infinite homogeneous set for $C$ is not $\Sigma^0_i$, for any $i\in \Nat$. 
\end{theorem}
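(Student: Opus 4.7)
The plan is to design a computable two-coloring $C$ of $[\Nat]^{!\omega}$ so that any infinite $C$-homogeneous set $H$, by restricting to its tails, yields infinite homogeneous sets for the base colorings $e^{\emptyset}_n$ of arbitrarily large arity. Since by Theorem \ref{thm:sequence} the function $e^{\emptyset}_n = e^{K_0}_n$ admits no homogeneous set in $\Sigma^0_{n+2}$, letting $n$ range over all of $\Nat$ will simultaneously rule $H$ out of every level $\Sigma^0_i$ of the arithmetical hierarchy.

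Concretely, for an exactly large set $s = \{s_0 < s_1 < \cdots < s_{s_0}\}$ with $s_0 \geq 2$, I set
$$C(s) := e^{\emptyset}_{s_0 - 2}(s_1, \ldots, s_{s_0}),$$
and extend $C$ arbitrarily on the (few) exactly large sets with minimum less than $2$. Since $e^X_n$ takes $n+2$ arguments, $e^{\emptyset}_{s_0-2}$ consumes exactly the $s_0$ elements $s_1, \ldots, s_{s_0}$, so the definition matches. Uniform computability of the sequence $n \mapsto e^{\emptyset}_n$, granted by Theorem \ref{thm:sequence}, makes $C$ computable.

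Next, let $H = \{h_0 < h_1 < h_2 < \cdots\}$ be any infinite $C$-homogeneous set, of constant color $c$. For each $j$ with $h_j \geq 2$, the exactly large subsets of $H$ with minimum $h_j$ are precisely the sets $\{h_j\} \cup S$ with $S \in [H \cap (h_j, \infty)]^{h_j}$, and homogeneity forces $e^{\emptyset}_{h_j - 2}(S) = c$ for every such $S$. Hence $H \cap (h_j, \infty)$ is an infinite $e^{\emptyset}_{h_j - 2}$-homogeneous set. By Theorem \ref{thm:sequence} applied with $i = 0$ and $n = h_j - 2$, no such homogeneous set lies in $\Sigma^0_{h_j}$. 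Since $H \cap (h_j, \infty) \leq_T H$, if $H$ were $\Sigma^0_i$ then so would be $H \cap (h_j, \infty)$, which contradicts the above whenever $i \leq h_j$. Because $H$ is infinite, $h_j$ is unbounded, so no $i \in \Nat$ can work, proving $H \notin \Sigma^0_i$ for every $i$.

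The subtle point is threading the needle between computability of $C$ and extractability of arbitrarily hard homogeneous sets from $C$-homogeneous ones: the trick of letting $s_0$ simultaneously fix both the arity and the index via $n = s_0 - 2$ achieves this, since $|s| - 1 = s_0$ exactly matches the arity of $e^{\emptyset}_{s_0 - 2}$ and the oracle is kept trivial ($X = \emptyset$), so the complexity of the extracted homogeneous sets grows automatically with the elements of $H$.
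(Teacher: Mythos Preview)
Your proof is correct and follows essentially the same approach as the paper's: define $C$ on an exactly large set by applying the appropriate $e^{K_0}_n$ (the paper keeps the minimum element among the arguments and uses $n=s_1-1$, while you drop it and use $n=s_0-2$, but this is an inessential indexing difference), and then observe that any $C$-homogeneous set yields, via its tails, homogeneous sets for colorings $e^{K_0}_n$ of unbounded index. One minor slip worth correcting: the statement ``$H \cap (h_j,\infty) \leq_T H$'' does not by itself imply preservation of $\Sigma^0_i$-ness (Turing reducibility does not preserve levels of the arithmetical hierarchy); the correct and trivial reason is that $H \cap (h_j,\infty)$ is the intersection of $H$ with a computable set, hence remains $\Sigma^0_i$ whenever $H$ is.
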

\begin{proof}
Let $S=\set{s_1,\dots, s_{\card(S)}}$ be an exactly large set. Then $\card(S)=s_1+1$. 
We define 
$$
C(S)=e^{K_0}_{s_1-1}(s_1,\dots,s_{\card(S)}).
$$
Then any infinite homogeneous set $Y$ for $C$ has to be also homogeneous
for $e^{K_0}_{a-1}$, for each $a\in Y$. By Theorem \ref{thm:sequence} 
such a set is not in $\Sigma^0_{a+1}$.
Since $Y$ is infinite, $Y$ is not $\Sigma^0_i$, for any $i\geq 0$.
\end{proof}

We next show that for each set $A$ the principle $\RT(!\omega)$ has computable in $A$ instances all of whose solutions 
compute $A^{(\omega)}$. It follows as a corollary that $\RT(!\omega)$
proves over $\RCA_0$ that for every set $X$ the $\omega$-jump of $X$ exists. 

We give two proofs of this result. The construction in the first one 
mimics some indicator constructions for $\Sigma^0_n$ classes of formulas.
The second proof is obtained by adapting a recent proof by Dzhafarov and Hirst
\cite{Dza-Hir:11} in combination with an old result by Enderton and Putnam 
\cite{End-Put:70}. 

\begin{theorem}\label{thm:good}
For each set $A$ there exists a computable in $A$ coloring $C_\omega:[\Nat]^{!\omega}\to 2$ such that
all infinite homogeneous sets for $C_\omega$ compute $A^{(\omega)}$.
\end{theorem}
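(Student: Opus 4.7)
The plan is to build $C_\omega$ by packaging, for each possible value $a$ of the minimum of an exactly large set, an $A$-computable coloring of $(a{+}1)$-tuples whose infinite homogeneous sets provably compute $A^{(a-1)}$ (uniformly in $a$). Any infinite $C_\omega$-homogeneous set $Y$ will then compute $A^{(n)}$ for every $n$ with a single uniform reduction, and hence compute $A^{(\omega)} = \bigoplus_n A^{(n)}$. This is exactly the ``indicator'' flavour alluded to in the introduction to the subsection: the varying size of an exactly large subset lets the coloring simulate, at the slice starting at $a$, a full Jockusch-style coloring of $a$-tuples whose homogeneous sets encode the $(a-1)$-st jump.

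First I would establish a uniform relativized strengthening of Theorem \ref{thm:sequence}: there is an $A$-computable sequence of colorings $J^{A}_n\colon[\Nat]^{n+2}\to\{0,1\}$ such that every infinite homogeneous set for $J^{A}_n$ computes $A^{(n)}$, with the reduction obtained uniformly from $n$ and an index for $A$. The base case $n=0$ is a relativization of Jockusch's Theorem 5.7 from \cite{Joc:72} (essentially Theorem \ref{thm:jockusch}(3) relativized to $A$): there is an $A$-computable $J_0^A\colon[\Nat]^2\to 2$ all of whose infinite homogeneous sets compute $A$. The inductive step mimics the Shoenfield limit-lemma recursion already used in the proof of Theorem \ref{thm:sequence}: one lifts a $K_{i+1}^A$-coloring with the desired computation-forcing property to a $K_i^A$-coloring of one-higher dimension, whose homogeneous sets then retain enough information to recover $K_{i+1}^A$ (and hence, inductively, $A^{(n)}$).

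With the $J^A_n$ in hand, the coloring is defined by
\[
C_\omega(\{a_0,a_1,\dots,a_{a_0}\}) \;=\; J^{A}_{\,a_0-1}\!\bigl(a_0,a_1,\dots,a_{a_0}\bigr)
\]
for $a_0\geq 2$, and $C_\omega(S)=0$ otherwise; this is $A$-computable. Now let $Y=\{y_0<y_1<\dots\}$ be an infinite $C_\omega$-homogeneous set of colour $c$, and fix any $k$ with $a:=y_k\geq 2$. The exactly large subsets of $Y$ with minimum $a$ are the sets $\{a,y_{j_1},\dots,y_{j_a}\}$ for $k<j_1<\dots<j_a$, and by homogeneity each such set is coloured $c$. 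In other words, the $a$-ary slice $(x_1,\dots,x_a)\mapsto J^{A}_{a-1}(a,x_1,\dots,x_a)$ is monochromatic on $[Y\cap(a,\infty)]^{a}$. By the choice of $J^{A}_{a-1}$ (once we have arranged that freezing the first argument does not destroy the lower bound), $Y$ computes $A^{(a-1)}$ with a reduction uniform in $a$. Since $a=y_k$ can be made arbitrarily large, $Y$ uniformly computes $A^{(n)}$ for every $n$, and thus computes $A^{(\omega)}$.

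The main obstacle is exactly the parenthetical remark above: the base lemma produces colorings of $n$-tuples, but in the reduction from $Y$ we use homogeneity for the coloring \emph{after} fixing the first coordinate to~$a$. I expect to handle this either by building $J^A_n$ so that the conclusion is preserved under arbitrary fixing of one argument (e.g.\ by padding the construction of $J^A_0$ with a dummy first coordinate throughout the recursion), or by shifting indices so that the relevant information sits on the last $n$ coordinates of the tuple. A secondary, more bookkeeping obstacle is keeping every reduction uniformly computable in $A$ and in $Y$, so that the Turing functional computing $A^{(\omega)}$ from $Y$ is explicit; this uniformity is what will let the computability result upgrade, in the next subsection, to the $\RCA_0$-implication from $\RT(!\omega)$ to $\forall X\,\exists Y(Y=X^{(\omega)})$.
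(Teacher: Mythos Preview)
Your high-level strategy --- package, at the slice with minimum $a$, an $A$-computable coloring of $a$-tuples whose homogeneous sets compute a jump of $A$, and then glue --- is exactly the paper's strategy. The implementations, however, are genuinely different.

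The paper explicitly does \emph{not} invoke Jockusch's Lemma~5.9/Theorem~5.7; it says so in the introduction to the subsection. Instead it constructs the colorings $C_n$ (and accompanying total machines $M_n$) by hand in an ``indicator'' style: $C_2(k,y,z)=1$ iff halting behaviour of $\{e\}^A(0)$ for $e\le k$ has stabilised between $y$ and $z$, and $C_{n+1}$ is defined using $M_n$ to approximate $A^{(n-1)}$. The crucial payoff is a very strong \emph{tuple-level} uniformity: every infinite homogeneous set is of colour~$1$, and for \emph{any} single tuple $(a_1,\dots,a_{n+1})$ drawn from such a set, $M_n(\cdot,(a_1,\dots,a_{n+1}))$ already decides $A^{(n-1)}$ below $a_1$. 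This makes the glueing trivial: $C_\omega(a_1,\dots,a_k)=C_{a_1}(a_1,\dots,a_k)$ and $M_\omega(e,a)=M_{a_1}(e,a)$, and the final machine $M$ that computes $A^{(\omega)}$ from a homogeneous $X$ just reads off one exactly large tuple and runs the appropriate $M_{a_1}$.

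Your route instead takes Jockusch's Lemma~5.9 as the base case and iterates the limit lemma. This can be made to work, but your two ``obstacles'' are heavier than you suggest. For the freezing issue, the clean fix is your ``shift'': set $C_\omega(\{a_0,a_1,\dots,a_{a_0}\})=J^A_{a_0-2}(a_1,\dots,a_{a_0})$ so that $Y\cap(a,\infty)$ is genuinely homogeneous for $J^A_{a-2}$ (an $a$-ary coloring), yielding $A^{(a-2)}$ rather than $A^{(a-1)}$; the padding variant as you wrote it would ask for an $A$-computable coloring of pairs whose homogeneous sets all compute $A'$, which Seetapun's theorem rules out. For uniformity, you must check that Jockusch's Lemma~5.9 provides a \emph{single} functional $\Phi$ with $\Phi^H=f$ for every $C_f$-homogeneous $H$, uniformly in an index for $f$, and that this survives the limit-lemma iteration without reintroducing the jumps you are trying to compute; this is true but not mere bookkeeping, and is precisely what the paper sidesteps by building the $M_n$ explicitly. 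In short: same architecture, but the paper replaces your black-box appeal to Jockusch with a self-contained construction that delivers the needed uniformity at the level of individual tuples.
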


\begin{proof}
We fix the following definitions of Turing jumps for the sake of the present proof. 
For a set $X$ we denote by $X'$ the set 
of indices of Turing machines which stop on input $0$ with $X$ as an oracle:
$$
X'=\set{e: \set{e}^X(0){\stops}}.
$$
We denote the $n$-th jump of $X$ by $X^{(n)}$. For formalization issues, 
saying that `$X^{(n)}$ exists' is conveniently read as saying that 
there exists a set $X \subseteq \{0,\dots,n\}\times \Nat$ such that for each 
$i<n$, $\{a\,:\, (i+1,a)\in X\}$ is the jump of $\{b\,:\, (i,b)\in X\}$.

The $\omega$ jump, $X^{(\omega)}$, of a set $X$ is the set
$$
X^\omega=\set{ (i,j) : j \in X^{(i)}}.
$$
For formalization issues, saying that `$X^{(\omega)}$ exists' is conveniently 
read as saying that a set $Y$ exists such that, for all $n\in\Nat$, the $n$-th
projection of $Y$ is equal to $X^{(n)}$. 

Let $A$ be an arbitrary set.
We define a family of computable in $A$ colorings $C_n:[\Nat]^{n+1} \rightarrow \{0,1\}$, 
for $n\in \Nat$ and $n\geq 2$, and Turing machines $M_n(x,y)$ such that 
for any $n\geq 2$, the following three points hold. 
\begin{enumerate}
\item
All infinite homogeneous sets for $C_n$ have color $1$.
\item 
If $X$ is an infinite homogeneous set for $C_n$ then for any
$a_1 <\dots < a_{n+1}\in X$  it holds that if $a$ is a code for a sequence 
$(a_1,\dots, a_{n+1})$ then 
$M_n(x, a)$ decides $A^{(n-1)}$ for machines with indices less than or equal to $a_1$. 
\item
Machines $M_n$ are total. If their inputs are not from an infinite homogeneous set for $C_n$ then 
we have no guarantee on the correctness of their output.
\end{enumerate}
The second condition is a kind of uniformity condition. It states that no matter how we choose a sequence
$a=(a_1,\dots, a_{n-1})$ from an infinite $C_n$--homogeneous set we can decide $A^{(n-1)}$ below $a_1$
with one, recursively constructed machine $M_n$ which  is given $a$ as an oracle. 

We fix a pairing function $ \frac{x(x+1)}{2}+y$ which is a bijection between $\Nat^2$ and $\Nat$ and
denote it by $\pair{x,y}$. 
We define $C_2$ as
$$
C_2(k, y,z) = \left\{
\begin{array}{ll}
1 & \text{if }\forall e \leq k (\set{e}_y^{A}(0)\stops \pimp \set{e}_z^{A}(0)\stops)\\
0 & \text{otherwise.}
\end{array}
\right.
$$

Now, if $X$ is an infinite $C_2$--homogeneous set then it has to be colored $1$. 
If $k \in X$ then 
let us take a bound $b\in X$ such that for each Turing machine $e\leq  k$ 
$$
\set{e}^{A}(0)\stops \pimp \set{e}^{A}_b(0)\stops.
$$
Such a bound exists since $X$ is infinite and there are only finitely many Turing machines below $k$.
It follows that any $y\in X$ greater than $b$  
has the above property too. Therefore,   the color of any tuple $\{k,y, y'\}\in [X]^3$,
where $y,y'\geq b$  has to be $1$. It follows that the whole $X$ has to be colored $1$.

Let us also observe that it is easy to construct a machine $M_2(e,(k,b,b') )$ that 
searches for a computation of $e$ below $b$, provided that $e\leq k$.
Such a machine decides $A'$ up to $k$ if it is given $k$ and $b >k $
which belongs to some infinite $C_2$--homogeneous set.

Now, let us assume that we have constructed $C_n$ and $M_n$ for some $n\geq 2$. We obtain
$C_{n+1}$ and $M_{n+1}$ as follows. We set $C_{n+1}(a_1,\dots, a_{n+2})=$
$$\begin{cases}
1 & \mbox{if  }\{a_1,\dots, a_{n+2}\} \mbox{ is } C_n\mbox{--homogeneous and}\\
    &   \forall e\leq a_1 (\set{e}^{Y}_{a_2}(0)\stops\pimp \set{e}^{Y}_{a_3}(0)\stops),\text{ where}\\
      & Y=\set{i \leq a_2 : M_n(i,(a_2,\dots, a_{n+2}))  \text{ accepts,}}\\
0 & \text{otherwise}.
\end{cases}
$$

Ideally, we would like to replace the condition in the second line of  the above definition
 by
$$
\forall e\leq a_1 (\set{e}^{A^{(n-1)}}_{a_2}(0)\stops\pimp \set{e}^{A^{(n-1)}}_{a_3}(0)\stops).
$$
However, such a condition would lead to a  coloring which may be non-recursive in $A$. 
Thus, instead of checking $\set{e}^{A^{(n-1)}}_{z}(0)\stops$
we use approximations of these sets computed by machines~$M_{n}$.

Now, let an infinite set $X$ be $C_{n+1}$--homogeneous and assume, towards a contradiction, 
that it is colored $0$. Let us take an infinite $Z\subseteq X$ such that $Z$ is colored $1$ by $C_n$. 
For a given $a_1\in Z$ let $a_2$ be so large that $M_n$ can correctly decide all oracles queries for machines below $a_1$ 
on input $0$. Let us take $a_3,\dots, a_{n+2}\in Z$ such that
$$
\forall  e \leq a_1 (\set{e}^Y_{a_2}(0)\stops \pimp \set{e}^Y_{a_3}(0)\stops),
$$
where $Y=\set{i \leq a_2 : M_n(i,(a_2,\dots, a_{n+2}))  \text{ accepts}}$.
Again, such $a_2,\dots, a_{n+1}$ exists since there are only finitely many machines below $a_1$ and 
$M_n(i, (a_1,\dots,a_{n+2}))$ correctly decides $A^{(n-1)}$ below $a_2$. Thus, we have equivalence
$$
\forall  e \leq a_1 ( \set{e}^Y_{a_2}(0)\stops  \pimp \set{e}^{A^{(n-1)}}(0)\stops).
$$
Now, it is easy to see that the color of $C_{n+1}(a_1,\dots, a_{n+2})=1$ and, consequently,
the whole $X$ is colored $1$.

Now, let us describe a Turing machine $M_{n+1}(e, (a_1,\dots, a_{n+2}))$ which decides $A^{(n)}$  below
$a_1$ if $(a_1,\dots, a_{n+2})$ is a sequence from an infinite $C_{n+1}$--homogeneous set.
We use the fact that for each $a_1 <a_2$ from an infinite $C_{n+1}$--homogeneous set 
and for all $e<a_1$ we have
$$
\set{e}^{A^{(n-1)}}_{a_1}(0)\stops  \pimp \set{e}^{A^{(n-1)}}_{a_2}(0)\stops
$$
and consequently, by infinity of the given $C_{n+1}$--homogeneous set,
$$
\set{e}^{A^{(n-1)}}_{a_1}(0)\stops  \pimp \set{e}^{A^{(n-1)}}(0)\stops.
$$

In the first part of the computation $M_{n+1}(e, (a_1,\dots, a_{n+2}))$ computes the set 
$$
Y=\set{ i \leq a_2: \text{$M_n(i, (a_2,\dots, a_{n+1}))$ accepts}}.
$$
Then, it checks whether $\set{e}^Y_{a_2}\stops$ and if this holds, $M_{n+1}$ accepts.

Now, we may turn our attention to colorings of $!\omega$--large sets.
We construct a computable coloring $C_\omega$ and a Turing machine $M_\omega(e,a)$ such that
\begin{enumerate}
\item
All infinite homogeneous sets for $C_\omega$ are colored $1$.
\item 
If $X$ is an infinite homogeneous set for $C_\omega$ then for any
for any $a_1 <\dots < a_{k}\in X$  it holds that if $\set{a_1,\dots, a_k}$ is an exactly $\omega$--large set and 
$a$ is a code for the sequence 
$(a_1,\dots, a_{k})$ then 
$M_\omega(x, a)$ decides $A^{(\omega)}$ for pairs $(i,j)$ such that  $i, j\leq a_1$. 
\item
Machine $M_\omega$ stops on all inputs. If the inputs are not from an infinite homogeneous set for $C_\omega$ then
we have no guarantee on the correctness of the output.
\end{enumerate}

We define  $C_\omega$ as follows.
$$
C_{\omega}(a_1,\dots, a_k) = C_{a_1}(a_1,\dots, a_k).
$$
For a sequence $a=(a_1,\dots, a_k)$, we define
$$
M_{\omega}(e, a) = M_{a_1}(e, a).
$$
Since any infinite $C_\omega$--homogeneous set $X$ is also $C_n$--homogeneous 
for any $n\in \Nat$ one can easily show that $C_\omega$ and $M_\omega$ have the required properties.

Finally, we define a machine $M(x)$ which decides $A^{(\omega)}$ with any infinite $C_\omega$--homogeneous set $X$ 
given as an oracle. Let us fix a recursive sequence of recursive functions $f_{i,j}$, for $i\leq j$, such that $f_{i,j}$ is a many--one reduction from $A^{(i)}$ to $A^{(j)}$. The machine $M$ on input $(i,j)$ searches for an element $a_1 \in X$ such that $i,j <a_1$ .
Then, it searches for the next $a_1$ elements of $X$, be they $a_2, \dots, a_k$. After constructing such a sequence
$M$ simulates $M_{a_1}( f_{i,a_1}(j), (a_1,\dots, a_k))$ and outputs the result of this simulation.

Let us observe that if we want $M$ to be provably total in some theory $T$ we need $T$ to prove
that for each infinite set $X$  and for each $y$ there exists an $\omega$--large subset of $X$ 
with $y$ as a minimum. But this is obviously true in the case of $\omega$--large sets.
\end{proof}

It is interesting to observe that a proof from the recent \cite{Dza-Hir:11} can be easily adapted 
to show that $\RT(!\omega)$ has a computable instance all of whose solutions compute $0^{(i)}$ for all $i\in\Nat$. 
This gives, in combination with a property of least upper bounds of sequences of degrees as we will see, 
an alternative proof of our Reverse Mathematics corollary of Theorem \ref{thm:good}. We now give the necessary 
details, which illustrate a strict analogy between model-theoretic-like constructions as in the proof of 
Theorem \ref{thm:good} and computability-theoretic constructions. 

The proof of the following proposition is modeled after the proof of Proposition 4.4 
in \cite{Dza-Hir:11}. Although the latter proof is for a different principle
(the so-called Polarized Ramsey Theorem), the gist of it is to show directly that 
$\forall n\RT^n$ implies $\forall n \forall X \exists Y (Y=X^{(n)})$ without the need of formalizing the proof of 
Theorem \ref{thm:jockusch} point (3). This turns out to be surprisingly well-suited for our purposes. 
We denote by $2\Nat$ the set of even natural numbers. Note that the proof can be carried out in 
$\ACA_0'$, which is available under the assumption of $\RT(!\omega)$ by virtue of Proposition \ref{prop:impliesRamsey}. 

\begin{proposition}\label{prop:hardcoloring}
For every set $X$ there exists a computable coloring $C^X:[\Nat]^{!\omega}\to 2$ such that
if $H\subseteq 2\Nat$ is an infinite homogeneous set for $C$ then
$H$ computes $X^{(n-1)}$ for every $2n\in H$. 
\end{proposition}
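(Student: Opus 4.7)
The plan is to adapt the construction of Proposition~4.4 of \cite{Dza-Hir:11}, which for each $n$ yields an $X$-computable coloring of $[\Nat]^{n+1}$ whose homogeneous sets compute $X^{(n-1)}$, into a single $X$-computable coloring of $[\Nat]^{!\omega}$ that simultaneously codes every finite jump of $X$. The restriction $H \subseteq 2\Nat$ will be used to read off the level of the jump being decoded from the even minima $2n$ of exactly large subsets of $H$.

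First, for each $n \geq 1$ I would construct an $X$-computable coloring $c_n^X \colon [\Nat]^{n+1} \to 2$ with the following \emph{fixed-leading-coordinate} homogeneity property: whenever $B \subseteq \Nat$ is infinite and the value $c_n^X(m, b_1, \ldots, b_n)$ is constant as $(b_1, \ldots, b_n)$ ranges over $[B]^n$, the pair $(m, B)$ uniformly computes $X^{(n-1)}$. This I would carry out by the same inductive stability / Limit-Lemma scheme used in the proof of Theorem \ref{thm:good}: at each level one turns an $X^{(i)}$-decidable stability condition into an $X^{(i-1)}$-approximable one, with $m$ serving as a bound on the range of Turing-machine indices being tracked. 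I would then combine the $c_n^X$ into a single exactly-large coloring by setting
$$
C^X(\{a_1, a_2, \ldots, a_k\}) \;=\; \begin{cases} c_n^X(a_1, a_2, \ldots, a_{n+1}) & \text{if } a_1 = 2n \text{ for some } n \geq 1,\\ 0 & \text{otherwise}.\end{cases}
$$
Since $k = a_1 + 1 = 2n + 1 \geq n + 1$ whenever $a_1 = 2n$, this is well-defined, and $C^X$ is $X$-computable.

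To verify the conclusion, let $H \subseteq 2\Nat$ be infinite and $C^X$-homogeneous of color $c$, fix any $2n \in H$ with $n \geq 1$, and set $B := H \cap (2n, \infty)$. For any $b_1 < \cdots < b_n$ in $B$, infinitude of $H$ supplies $n$ further elements $d_1 < \cdots < d_n$ of $B$ above $b_n$, so $\{2n, b_1, \ldots, b_n, d_1, \ldots, d_n\}$ is an exactly large subset of $H$, and $C^X$-homogeneity forces $c_n^X(2n, b_1, \ldots, b_n) = c$. Hence $c_n^X(2n, \cdot)$ is constantly $c$ on $[B]^n$, so by the fixed-leading-coordinate property $X^{(n-1)} \leq_T H$, as required.

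The main obstacle is establishing the fixed-leading-coordinate property of the $c_n^X$. In the polarized Ramsey setting of \cite{Dza-Hir:11} one is handed $n+1$ separate infinite polarized sets $H_1, \ldots, H_{n+1}$, whereas here the first ``column'' is pinned to the single value $m = 2n$, which is \emph{a priori} more restrictive. Inspection of the Limit-Lemma iteration shows, however, that the leading coordinate enters only as a bound on the Turing-machine indices whose convergence is being approximated, and since $m = 2n$ grows unboundedly with $n$, it will dominate all indices relevant at level $n$. Once this is checked, the rest of the argument is routine bookkeeping.
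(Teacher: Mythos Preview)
Your argument has a genuine gap: the ``fixed-leading-coordinate'' property you posit for the $c_n^X$ is impossible to achieve, already at $n=2$. Once the leading coordinate is frozen to a single value $m$, the map $(b_1,b_2)\mapsto c_2^X(m,b_1,b_2)$ is simply an $X$-computable $2$-coloring of pairs, and by Seetapun's cone-avoidance theorem (relativized to $X$) every such coloring admits an infinite homogeneous set $B$ with $X'\not\leq_T B\oplus X$. Since $m$ is a single number, $(m,B)\equiv_T B$, so no choice of $c_2^X$ can force $(m,B)$ to compute $X'$. Your closing remark that ``$m=2n$ grows unboundedly with $n$'' does not address this: for each \emph{fixed} $2n\in H$ the proposition demands the \emph{full} set $X^{(n-1)}$, not merely its restriction to indices below $2n$, and $2n$ is the only bound your coloring makes available at that level. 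The phrase ``indices relevant at level $n$'' conflates the finitely many Limit-Lemma indices used to set up the iteration with the infinitely many inputs on which $X^{(n-1)}$ must be decided.

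The paper avoids this obstruction by using all $2n$ free coordinates of an exactly large set $\{2n,a_1,\dots,a_{2n}\}$ rather than only the first $n$. Its coloring compares, for each $i\leq n$, the level-$i$ jump approximation built from $(a_n,\dots,a_{n-i+1})$ against the one built from $(a_{2n},\dots,a_{2n-i+1})$, with the inputs $(e,m)$ being tested bounded by $a_{n-i}$ --- an \emph{internal} coordinate that ranges freely over the infinite set $H$, not the fixed value $2n$. It is precisely this freedom in the index bound that allows homogeneity of $H$ to decide $X^{(n-1)}$ at arbitrarily large inputs. If you want to salvage your scheme, the repair is exactly this: let the index bound live among the $b_j$'s rather than at the frozen first coordinate, which in effect forces you to use (close to) all $2n$ coordinates, as the paper does.
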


\begin{proof}
For the sake of the present argument we define/formalize the assertion $Y=X'$ stating that 
$Y$ is the Turing jump of $X$ as follows. 
$$\forall x \forall e (\langle x,e\rangle \in Y \leftrightarrow \exists s (\{e\}^X_s(x)\stops)$$ 
The definition of the $n$th jump is then as in the proof of Theorem \ref{thm:good}. 
Following \cite{Dza-Hir:11} we define
the following approximations of the finite jumps (where \cite{Dza-Hir:11} use $\Phi$ 
we use $W$, $\Phi$ being traditionally reserved for Blum Complexity Measures). 
For any set $X$ and integer $s$ define
$$ X'_s = \{ \langle m,e\rangle \;:\; (\exists t < s) m \in W_{e,t}^X\}.$$
For integers $u_1,\dots,u_n$ and $s$ define $X^{(0)}=X$, and 
$$ X^{(n+1)}_{u_n,\dots,u_1,s} = (X^{(n)}_{u_n,\dots,u_1})'_s.$$

$C^X$ is defined as follows. Let $A = \{a_0,\dots,a_p\}$ be exactly large, 
i.e., $a_0 = p$. If $a_0 = 2n$ for some $n$ let $C^X(A)=1$ if there exist
$1\leq i \leq n$ and $\exists (e,m) < a_{n-i}$ such that 
$$ \neg ((m,e) \in X^{(i)}_{a_n,\dots,a_{n-i+1}} \leftrightarrow (m,e) \in X^{(i)}_{a_{2n},\dots,a_{2n-i+1}})$$
and $C^X(A)=0$ otherwise. If $a_0 = 2n+1$ then $C^X(A)=0$ (the value is irrelevant in 
this case). Let $H$ be an infinite homogeneous set for $C^X$ as given by $\RT(!\omega)$
applied to $C^X$ and $M=[2,\infty)\cap 2\Nat$. 

\medskip
We first claim that the color of $C^X$ on $[H]^{!\omega}$ is $0$. Suppose otherwise. 
Let $A\in [H]^{!\omega}$ such that $C^X(A)= 1$. Then there exists $i\leq n$ such that
$\exists (e,m) < a_{n-i}$ such that 
$$ \neg ((m,e) \in X^i_{a_n,\dots,a_{n-i+1}} \pimp (m,e) \in X^i_{a_{2n},\dots,a_{2n-i+1}})$$
where $n$ is such that $A=\{2n,a_1,\dots,a_{2n}\}$. Now consider the coloring
obtained by coloring $B=\{b_1,\dots,b_{2n}\}\in [H\cap (2n,\infty)]^{2n}$ with the
least $i\leq n$ such that
$\exists (e,m) < b_{n-i}$ such that 
$$ \neg ((m,e) \in X^{(i)}_{b_n,\dots,b_{n-i+1}} \pimp (m,e) \in X^{(i)}_{b_{2n},\dots,b_{2n-i+1}}).$$
By Ramsey Theorem $\RT^{2n}_n$, this coloring admits
an infinite homogeneous set $H'\subseteq H\cap (2n,\infty)$. Then we argue exactly as
in \cite{Dza-Hir:11} to obtain a contradiction.

\medskip
Now we claim that for every $h\in H$, $X^{(n-1)}$ is computable in $H$, where $h = 2n$. 
In fact we show that $X^{(n-1)}$ is definable by recursive comprehension from $H$. We define a finite sequence
$(X_0,\dots,X_{n-1})$ as follows. $X_0=X$. For each $i\in [1,n)$, $(m,e)\in X_i$ if and only 
if $(m,e) \in X^{(i)}_{a_n,a_{n-1},\dots,a_{n-i+1}}$ where $(2n,a_1,\dots,a_n,a_{n+1},\dots,a_{2n})$ is
the lexicographically least exactly large set in $H$ such that $(m,e) < a_{n-i}$. 

We claim that for each $i < n-1$, $X_{i+1}=X_i'$. 

First we show that $X_{i+1}\subseteq X_i'$. Suppose $(m,e)\in X_{i+1}$. By definition 
of $X_{i+1}$, $(m,e) \in X^{(i+1)}_{a_n,a_{n-1},\dots,a_{n-i}}$ where $(2n,a_1,\dots,a_n,a_{n+1},\dots,a_{2n})$ is
the lexicographically least exactly large set in $H$ such that $(m,e) < a_{n-i-1}$.
Thus $(m,e)\in (X^{(i)}_{a_n,\dots,a_{n-i+1}})'_{a_{n-i}}$, 
and so $(\exists t < a_{n-i})(m\in W_{e,t}^{X^{(i)}_{a_n,\dots,a_{n-i+1}}})$. Since $a_{n-i}$ bounds the use of the computation, 
and by homogeneity of $H$, it follows that $X^{(i)}_{a_n,\dots,a_{n-i+1}}$ and $X_i$ agree below $a_{n-i}$. 
Therefore $(\exists t < a_{n-i})(m\in W_{e,t}^{X_i})$.  

Next we show that $X_i'\subseteq X_{i+1}$. Suppose $(m,e)\in X_i'$. Then there exists $t$ such that
$m\in W_{e,t}^{X_i}$. Let $(2n,a_1,\dots,a_n,a_{n+1},\dots,a_{2n})$ be
the lexicographically least exactly large set in $H$ such that $(m,e) < a_{n-i-1}$.
Choose $b_{n-i} \in H$ such that $b_{n-i} > \max\{t,a_{n-i-1}\}$. Choose an increasing tuple
$(b_{n-i+1},\dots,b_n)$ in $H$ with $b_{n-i} < b_{n-i+1}$. By the homogeneity of $H$
and the definition of $X_i$, the sets $X_i$ and $X^{(i)}_{b_n,\dots,b_{n-i+1}}$ agree on elements below
$b_{n-i}$. Thus $(\exists w < b_{n-i})(m\in W_{e,t}^{b_n,\dots,b_{n-i+1}})$, i.e., 
$(m,e)\in (X^{(i)}_{b_n,\dots,b_{n-i+1}})'_{b_{n-i}}$, and the latter set is equal to 
$X^{(i+1)}_{b_n,\dots,b_{n-i}}$. By homogeneity of $H$ we then have that
$(m,e) \in X^{(i+1)}_{a_n,\dots,a_{n-i}}$, hence $(m,e)\in X_{i+1}$. 
\end{proof}

It is well-known that $\{ 0^{(i)} \,:\, i\in\Nat\}$ has no least upper bound. Yet we can obtain from the previous
proposition a result about $0^{(\omega)}$ by the following result by Enderton and Putnam
\cite{End-Put:70}. The original proof needs the existence of the double jump, which is well-within 
our current base theory by virtue of Proposition \ref{prop:impliesRamsey}. 

\begin{lemma}[Enderton-Putnam, \cite{End-Put:70}]\label{lem:EP70}
Let $I$ be an infinite set. Let $X$ be a set. Let $Y$ be a set such that for every 
$i\in I$, $X^{(i)} \leq_T Y$. Then, $X^{(\omega)}$ is many-one reducible to $Y^{(2)}$.
\end{lemma}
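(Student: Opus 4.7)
First, since $I$ is infinite, for each $n\in\omega$ we can pick $m\in I$ with $m\geq n$ and conclude $X^{(n)}\leq_T X^{(m)}\leq_T Y$. So the hypothesis automatically strengthens to $X^{(n)}\leq_T Y$ for every $n\in\omega$. The plan is then to show that $X^{(\omega)}\in\Sigma^0_2(Y)$; since $Y^{(2)}$ is many-one complete for $\Sigma^0_2(Y)$, this will give the desired $X^{(\omega)}\leq_m Y^{(2)}$.

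The intended witness for $(n,j)\in X^{(\omega)}$ is a \emph{tower} $(e_0,\ldots,e_n)$ of $Y$-indices such that $\{e_0\}^Y=\chi_X$, each $\{e_{m+1}\}^Y$ is the characteristic function of the Turing jump of $\{e_m\}^Y$, and $\{e_n\}^Y(j)=1$. Such towers exist because of the strengthened hypothesis. A naive formulation yields only $\Sigma^0_3(Y)$, because asserting that $\{e_{m+1}\}^Y$ equals the jump of $\{e_m\}^Y$ is intrinsically $\Pi^0_2(Y)$: one must demand totality of the functions together with pointwise agreement, including non-halting assertions at the inputs where the jump is $0$.

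The key technical move I would carry out is to push halting content into an existential witness carrying use-bounds, while isolating all non-halting content into a single outer universal quantifier. Concretely, existentially quantify the tuple $(e_0,\ldots,e_n)$ together with bounds $(u_0,\ldots,u_n)$ and a step count $s$, requiring (i) $\{e_n\}^Y_s(j)=1$ with $Y$-use bounded by $u_n$; (ii) the bounded agreement $\{e_0\}^Y(k)=\chi_X(k)$ for $k\leq u_0$ (using a fixed $Y$-index for $\chi_X$); and (iii) for each $m<n$ and $k=\langle p,q\rangle\leq u_{m+1}$ with $\{e_{m+1}\}^Y(k)=1$, the bounded simulation $\{q\}^{\{e_m\}^Y}_s(p)\downarrow$. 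Then conjoin an outer $\forall (m,k,t)$ ruling out every counter-example of the form ``$k\leq u_{m+1}$ with $\{e_{m+1}\}^Y(k)=0$ yet $\{q\}^{\{e_m\}^Y}_t(p)\downarrow$''. All inner checks are $\Delta^0_1(Y)$, and the resulting statement has shape $\exists\,\forall\,\Delta^0_1(Y)$, i.e.\ $\Sigma^0_2(Y)$.

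The main obstacle is verifying that this existentially-witnessed, universally-patched formulation is equivalent to genuine membership of $(n,j)$ in $X^{(\omega)}$. One direction is easy: a true tower with generously chosen bounds certifies $(n,j)\in X^{(\omega)}$ and leaves the outer universal vacuous. The subtle direction is to argue that no spurious tower — one agreeing with the true jumps only below its bounds — can certify a false $(n,j)$. Here the outer universal prevents wrong non-halting claims, which together with adequately large bounds forces the top-level computation $\{e_n\}^Y(j)$ to agree with $X^{(n)}(j)$. Ensuring that the bounds $u_m$ can always be chosen large enough to dominate every use-tree descending from the target computation is what makes the second Turing jump of $Y$ both necessary and sufficient, and is the substance of the original Enderton--Putnam argument.
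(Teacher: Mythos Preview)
The paper does not supply its own proof of this lemma; it simply cites \cite{End-Put:70} and remarks that the argument needs the double jump. So there is no ``paper's proof'' to compare against --- your write-up is being measured against the original Enderton--Putnam argument that you yourself invoke at the end.

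Your overall strategy --- reduce to showing $X^{(\omega)}\in\Sigma^0_2(Y)$ and use $\Sigma^0_2(Y)$-completeness of $Y''$ --- is the right one, and you correctly identify that the naive tower description is only $\Sigma^0_3(Y)$ because the correctness clauses are $\Pi^0_2(Y)$. But your concrete $\Sigma^0_2$ formula has a genuine gap in the backward (soundness) direction, and it is precisely the step you wave away at the end. Your bounded check (ii) only pins down $\{e_0\}^Y$ to agree with $\chi_X$ on $[0,u_0]$, and your checks (iii) and the outer $\forall$ only compare $\{e_{m+1}\}^Y$ against the jump of \emph{whatever $\{e_m\}^Y$ computes}, not against $X^{(m+1)}$. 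The problem is that the jump does not respect bounded agreement: a computation $\{k\}^{\{e_0\}^Y}_t(\cdot)$ witnessing $k\in A_0'$ for some $k\le u_1$ may query the oracle well above $u_0$, where $\{e_0\}^Y$ need not match $X$. So from your hypotheses you cannot conclude that $A_1\upharpoonright(u_1{+}1)=X'\upharpoonright(u_1{+}1)$, and the error can propagate all the way up to a spurious $\{e_n\}^Y(j)=1$. Requiring instead that $\{e_0\}^Y$ agree with $\chi_X$ \emph{everywhere} reinstates a $\Pi^0_2(Y)$ totality obligation and puts you back at $\Sigma^0_3(Y)$.

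In short, you have correctly located where the difficulty lies but not resolved it; the sentence ``is the substance of the original Enderton--Putnam argument'' is doing all the work. A complete proof must either (a) encode the entire finite \emph{use-tree} of the target computation as the existential witness --- so that every oracle query at every level is explicitly certified down to level~$0$ and the only unbounded content is the $\Pi^0_1(Y)$ non-halting claims at the ``no'' leaves --- or (b) argue via least indices of genuine towers, exploiting that ``fails to be a tower'' is $\Sigma^0_2(Y)$. Either route needs real care; as written, your formula does not establish the lemma.
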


We can now derive our main proof-theoretical result of the present section. 
\begin{theorem}\label{thm:omegahard}
$\RT(!\omega)$ implies $\forall X \exists Y (Y=X^{(\omega)})$ over $\RCA_0$.
\end{theorem}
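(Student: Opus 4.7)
The plan is to derive the theorem as a direct formalization of Theorem \ref{thm:good} inside $\RCA_0$, using $\RT(!\omega)$ to produce the homogeneous set needed to read off the $\omega$-jump. Fix an arbitrary set $X$ and relativize the entire construction of Theorem \ref{thm:good} to $X$ in place of $A$. This produces, by primitive recursion on $n$, a sequence of indices for colorings $C^X_n\colon[\Nat]^{n+1}\to\{0,1\}$ together with indices for machines $M^X_n$, all obtained uniformly in (an index for) $X$. Combining these into $C^X_\omega(a_1,\dots,a_k) = C^X_{a_1}(a_1,\dots,a_k)$ and $M^X_\omega(e,a) = M^X_{a_1}(e,a)$ yields a coloring of $[\Nat]^{!\omega}$ that is $\Delta^0_1(X)$, hence available as a set in $\RCA_0$. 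Apply $\RT(!\omega)$ to the instance $(\Nat,C^X_\omega)$ to obtain an infinite homogeneous set $H$.

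Next I would argue, inside $\RCA_0$, that $Y := \{(i,j) : (\exists a_1,\dots,a_k \in H)(\{a_1,\dots,a_k\}\in[H]^{!\omega}\ \wedge\ i,j<a_1\ \wedge\ M^X_\omega(f_{i,a_1}(j),\langle a_1,\dots,a_k\rangle)\text{ accepts})\}$ is well-defined by $\Sigma^0_1$-comprehension from $H$ and $X$, but it is actually $\Delta^0_1$ because homogeneity of $H$ guarantees that the answer is independent of the particular exactly large tuple chosen from $H$ above $i,j$. Hence $Y$ exists as a set in $\RCA_0$. The main verification is then that $Y = X^{(\omega)}$, i.e., for every $i$ the $i$th projection of $Y$ coincides with $X^{(i)}$. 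I would prove this by formalized induction on the level $n$, using exactly the three invariants (color $1$, correct decision, totality) from the construction of $(C^X_n,M^X_n)$. For the base case, $X^{(1)}$ is defined directly from $X$ using $M^X_2$, which is $\RCA_0$-provably correct. For the inductive step, homogeneity of $H$ for $C^X_{n+1}$ (which is implied by homogeneity of $H$ for $C^X_\omega$, since every $!\omega$-large subset of $H$ large enough restricts to a $C^X_{n+1}$-homogeneous tuple) allows one to replace the approximation by $M^X_n$ with the true jump $X^{(n-1)}$ and to conclude that $M^X_{n+1}$ decides $X^{(n)}$ correctly below any $a_1 \in H$.

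The hard part, and where care is needed, is precisely this inductive verification: Theorem \ref{thm:good} is phrased informally over $\omega$-models, whereas here the induction has to be carried out on a natural number variable $n$ appearing inside the argument, with all the bounded quantifiers ranging over uses of computations formalized via $\{e\}^Y_s(x)\!\downarrow$. The statement being inducted on is $\Pi^0_2$ (asserting correctness of $M^X_n$ on all homogeneous inputs), so $\RCA_0$ does not directly support this induction. However, $\RT(!\omega)$ implies $\forall n\,\RT^n$ by Proposition \ref{prop:impliesRamsey}, hence $\ACA_0'$, and in particular $\ACA_0$; within $\ACA_0$ the needed arithmetical induction is available. Thus the argument proceeds in $\RCA_0 + \RT(!\omega)$ by first invoking Proposition \ref{prop:impliesRamsey} to secure the ambient system $\ACA_0'$, then carrying out the inductive verification described above.

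Alternatively, one could route the argument through Proposition \ref{prop:hardcoloring} and Lemma \ref{lem:EP70}: from an infinite homogeneous set $H\subseteq 2\Nat$ for the relativized $C^X$ one obtains $X^{(n-1)}\le_T H$ for every $2n\in H$, and then $X^{(\omega)}\le_m H^{(2)}$ by Enderton--Putnam. Since $\forall n\,\RT^n$ (available from Proposition \ref{prop:impliesRamsey}) gives the existence of $H^{(2)}$, recursive comprehension produces $X^{(\omega)}$. This second route trades the delicate inductive verification for a single appeal to Enderton--Putnam's lemma and is likely the cleanest formalization.
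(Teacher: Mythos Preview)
Your proposal is correct and matches the paper's own proof essentially verbatim: the paper offers exactly the same two routes, namely (i) a direct formalization of Theorem~\ref{thm:good} and (ii) the Proposition~\ref{prop:hardcoloring} plus Enderton--Putnam Lemma~\ref{lem:EP70} argument, with $H^{(2)}$ supplied via Proposition~\ref{prop:impliesRamsey} and Theorem~\ref{thm:McAloon}. Your discussion of the first route is in fact more detailed than the paper's one-line mention, and your identification of the need for $\ACA_0$-level induction (secured through Proposition~\ref{prop:impliesRamsey}) is precisely the right point of care.
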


\begin{proof}
The result can be obtained by formalization of the proof of Theorem \ref{thm:good}. 

Alternatively, we can argue as follows. The proof of Proposition \ref{prop:hardcoloring} is so devised as to formalize in 
$\ACA_0'$ which is well within our hypotheses $\RCA_0+\RT(!\omega)$ (see Proposition \ref{prop:impliesRamsey}. 
Let $X$ be a computable set and $C^X$ be as in Proposition \ref{prop:hardcoloring}.
Then, by that proposition, every homogeneous set for $H$ computes $0^{(i)}$ for all $i\in\Nat$. 
Let $H$ be such an infinite homogeneous set for $C^X$. Such an $H$ exists by $\RT(!\omega)$ 
applied to the instance $(2\Nat,C^X)$. Then by Lemma \ref{lem:EP70}, $H^{(2)}$ computes $X^{(\omega)}$. 
So it remains to show that $\RT(!\omega)$ implies that $H^{(2)}$ exists. But this is obvious
since $\RT(!\omega)$ implies $\forall n \RT^n$, by Proposition \ref{prop:impliesRamsey}, 
and $\forall n \RT^n$ implies $\forall X \forall n \exists Y (Y = X^{(n)})$, by Theorem 
\ref{thm:McAloon}.
\end{proof}

\subsection{Upper Bounds}

We show a reversal of Theorem \ref{thm:omegahard}.
\begin{theorem}\label{thm:upperbound}
$\forall X \exists Y (Y = X^{(\omega)})$ implies $\RT(!\omega)$ over $\RCA_0$.
\end{theorem}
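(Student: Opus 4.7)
The plan is to formalize, inside $\ACA_0^+ = \RCA_0 + \forall X\exists Y(Y = X^{(\omega)})$, the combinatorial proof of Theorem \ref{thm:rtomega} given at the beginning of Section 2, using $C^{(\omega)}$ as a parameter to replace the $\Sigma^1_1$-induction that proof ostensibly uses. Closure under the $\omega$-jump clearly yields closure under every finite jump (since $X^{(n)}$ is just the $n$-th section of $X^{(\omega)}$), so by Theorem \ref{thm:McAloon} we have $\forall n\,\RT^n$ as well as full arithmetical comprehension at our disposal.

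Fix an instance $(M, C)$ with $C : [\Nat]^{!\omega} \to 2$ and $M$ infinite, and let $Z = C^{(\omega)}$, which exists by hypothesis. For $a\in\Nat$ define $C_a(x_1,\dots,x_a) = C(a, x_1,\dots,x_a)$, a coloring of $a$-tuples that is uniformly $C$-computable in $a$. I would then define, by arithmetical comprehension relative to $Z$, a sequence $\{(a_i, X_i, c_i)\}_{i\in\Nat}$ mirroring the combinatorial proof: $a_0 = \min M$; $X_0$ is a canonical infinite $C_{a_0}$-homogeneous subset of $M\cap (a_0, \infty)$, of color $c_0\in\{0,1\}$; and inductively $a_{i+1} = \min X_i$, with $X_{i+1}$ a canonical infinite $C_{a_{i+1}}$-homogeneous subset of $X_i\cap (a_{i+1}, \infty)$ of color $c_{i+1}$. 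By the relativized and uniform form of Jockusch's Theorem \ref{thm:jockusch}(2), there is a fixed arithmetical procedure that, given $a$ and an appropriate infinite input set, returns an infinite $C_a$-homogeneous subset which is $\Pi^0_a$ in $C$; applied along our sequence, this makes each $X_i$ uniformly arithmetic in $Z$, so the whole sequence can be formed by arithmetical comprehension from $Z$.

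Once the sequence exists as a set, I would apply the infinite pigeonhole principle (available in $\RCA_0$) to $\{(a_i, c_i)\}_{i\in\Nat}$ to extract an infinite subsequence $\{a_{i_k}\}_{k\in\Nat}$ with a common color $c\in\{0,1\}$, and set $L = \{a_{i_k} : k\in\Nat\}$. To verify that $L$ is $C$-homogeneous on exactly large subsets, take any exactly large $S = \{b_0 < b_1 < \dots < b_{b_0}\}\subseteq L$, and write $b_0 = a_{i_{k_0}}$. By construction, for every $k > k_0$ we have $a_{i_k} \in X_{i_k - 1}\subseteq X_{i_{k_0}}$, so $\{b_1,\dots,b_{b_0}\}$ is an $a_{i_{k_0}}$-element subset of $X_{i_{k_0}}$, which gives $C(S) = C_{a_{i_{k_0}}}(b_1,\dots,b_{b_0}) = c_{i_{k_0}} = c$.

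The main obstacle I anticipate is pinning down a concrete uniform-in-$a$ arithmetical formula (with parameter $Z$) that picks out the canonical $\Pi^0_a$-in-$C$ infinite homogeneous set used at each stage, so that the stagewise recursion genuinely becomes a single instance of arithmetical comprehension rather than requiring a stronger scheme. This amounts to a careful formalization of Jockusch's construction, parameterized uniformly in $a$ and the starting set; everything else — the existence of the sequence as a set, and the final pigeonhole argument — is routine inside $\ACA_0^+$.
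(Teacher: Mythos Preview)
Your proposal is correct and takes essentially the same approach as the paper: both mimic the combinatorial proof of Theorem~\ref{thm:rtomega}, replacing the sets $X_i$ by uniformly generated indices relative to the single oracle $C^{(\omega)}$ so that the stagewise recursion becomes arithmetical. The paper's Lemma~\ref{lem:upperbound} supplies precisely the ``concrete uniform-in-$a$ arithmetical formula'' you flag as the main obstacle, building machines $f_a$ (via the Erd\H{o}s--Rado tree) that compute a $C_a$-homogeneous subset of the current set $U$ from oracle $(C\oplus U)^{(2a)}$; one small correction is that your canonical homogeneous set at stage $i$ is $\Pi^0_{a_i}$ in $C\oplus X_{i-1}$ rather than in $C$ alone, but since the levels compound recursively and stay below $C^{(\omega)}$ this is harmless, and your pigeonhole finish is identical to the paper's.
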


The idea of the proof is the following. We take the proof of 
$\RT(!\omega)$ in Theorem \ref{thm:rtomega} as a starting point. 
We replace the sets $X_i$ by Turing machines with oracles from $C^{(a)}$, 
for $a$ an element of a model of $\RCA_0$. These Turing machines are constructed in a uniform way.
These machines are designed so as to compute the sets $X_i$ and thus turn the induction 
in the proof of Theorem \ref{thm:rtomega} into a first-order induction.
Moreover, since they will need as oracles the sets $C^{(a)}$ the whole construction 
will be recursive in $C^{(\omega)}$.

The Lemma below presents the basic construction which replaces the use of sets $X_i$ by constructing
Turing machines with oracles. We do not tailor for optimality of the oracles used, rather for clarity of 
the construction and we only take care that all oracles used are below $C^{(\omega)}$.
We begin by recalling the definition of the Erd\H{o}s-Rado tree associated to a coloring.

\begin{definition}[Erd\H{o}s-Rado tree]
Let $a\geq 1$. Let $C:[\Nat]^{a+1}\to 2$. The Erd\H{o}s-Rado tree $T$ of $C$ is the set of 
finite sequences $t$ of natural numbers defined as follows. 
If $t$ is of length $\ell > n$, $t(n)$ is the least $j$ such that the following two conditions 
hold.
\begin{enumerate}
\item For all $m<n$, $t(m)<j$, and
\item For all $m_1< \dots < m_a < m \leq n$, $C(t(m_1),\dots, t(m_a),j) = C(t(m_1),\dots,t(m_a),t(m))$.
\end{enumerate}
\end{definition}
It is easy to see that $T$ is a finitely branching tree and computable in $C$. We denote by $A\oplus B$ the 
join of $A$ and $B$. 

\begin{lemma}\label{lem:upperbound}
Let $a\geq 1$. Let $\isfunc{C}{[U]^a}{2}$. One can find effectively a machine $f_a$ with oracle 
$(C\oplus U)^{(2a)}$ such that $f_a$ computes a $C$--homogeneous set.
\end{lemma}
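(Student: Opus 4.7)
My plan is to prove the lemma by induction on $a$, with the induction hypothesis being precisely the statement of the lemma: there is a uniform procedure that, given (an index for) a coloring $C\colon [U]^a\to 2$, outputs an index for a machine $f_a$ which, with oracle $(C\oplus U)^{(2a)}$, enumerates an infinite $C$-homogeneous subset of $U$. The base case $a=1$ is essentially the pigeonhole principle: the assertion ``$\{x\in U\,:\, C(x)=i\}$ is infinite'' is $\Pi^0_2$ in $C\oplus U$, so with oracle $(C\oplus U)^{(2)}$ we can decide the correct color $i$ and then enumerate $\{x\in U\,:\,C(x)=i\}$ using the weaker oracle $C\oplus U$.

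For the inductive step from $a$ to $a+1$, suppose $C\colon [U]^{a+1}\to 2$ is given. Form the Erd\H{o}s-Rado tree $T$ of $C$ as in the definition above; $T$ is a finitely branching tree computable in $C\oplus U$, and it has an infinite branch (since $U$ is infinite). Using the oracle $(C\oplus U)'$ we can compute the leftmost infinite path $P$ of $T$ by the standard procedure: at each node, ask the $\Pi^0_1(C\oplus U)$ question whether the subtree below a given child is infinite, and go leftmost. By the defining property of the Erd\H{o}s-Rado tree, for any $x_1<\dots<x_a$ in $P$ the value $C(x_1,\dots,x_a,y)$ is the same for every $y\in P$ with $y>x_a$; this defines an induced coloring $D\colon [P]^a\to 2$ which is computable in $P\oplus C$, and hence in $(C\oplus U)'$.

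Applying the induction hypothesis to $D$ (with $P$ in place of $U$), we obtain (uniformly) an index for a machine that, with oracle $(D\oplus P)^{(2a)}$, enumerates an infinite $D$-homogeneous subset $H\subseteq P$. Since $D\oplus P\leq_T (C\oplus U)'$, this oracle is reducible to $((C\oplus U)')^{(2a)}=(C\oplus U)^{(2a+1)}$, which in turn is reducible to $(C\oplus U)^{(2(a+1))}$. The set $H$ is $C$-homogeneous in $U$ by the pre-homogeneity of $P$. Composing the three pieces --- the construction of $T$, the leftmost-path extraction, and the machine provided by the induction hypothesis --- yields the required machine $f_{a+1}$ uniformly.

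The main technical point to attend to is \emph{uniformity}: the index of $f_a$ must be produced effectively from the index of $C$ (and of $U$), so at each step of the recursion we need indices rather than merely relative computability. This is straightforward but requires that the passage from the index of $C$ to indices of $T$, of the leftmost-path procedure, and of $D$ all be given by primitive recursive functions on indices, which follows from the $s$-$m$-$n$ theorem and the fact that each of these constructions is explicit in $C$ and the appropriate jumps. No optimization of the oracle bound is attempted; the factor $2$ in $(C\oplus U)^{(2a)}$ comfortably absorbs the single jump introduced at each recursive level.
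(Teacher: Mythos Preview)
Your proof follows exactly the paper's strategy: induction on $a$, with the inductive step passing through the Erd\H{o}s--Rado tree, extracting the leftmost infinite path $P$, forming the induced $a$-coloring on $P$, and invoking the induction hypothesis. Your treatment of uniformity is also in line with what is needed.

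One point needs correction. The Erd\H{o}s--Rado tree $T$ as defined here is a $(C\oplus U)$-computable subtree of $\Nat^{<\omega}$ that is finitely branching but \emph{not} computably bounded: the children of a node $\sigma$ are the least elements of $U$ realizing each of the (at most $2^{\binom{|\sigma|}{a}}$) color patterns, and there is no computable bound on how far out in $U$ those least witnesses lie. Hence ``the subtree below $\sigma$ is infinite'' is genuinely $\Pi^0_2(C\oplus U)$, not $\Pi^0_1(C\oplus U)$ as you claim. This is precisely why the paper spells out a $\Pi^0_2 \wedge \Sigma^0_2$ description of membership in the leftmost path and computes $P$ from a $\Pi^0_2(C\oplus U)$-complete oracle, i.e., from $(C\oplus U)''$ rather than $(C\oplus U)'$.

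This slip does not damage your conclusion. Replacing $(C\oplus U)'$ by $(C\oplus U)''$ in your estimate gives $D\oplus P \leq_T (C\oplus U)''$, so the inductive call yields a homogeneous set computable in $(D\oplus P)^{(2a)} \leq_T ((C\oplus U)'')^{(2a)} = (C\oplus U)^{(2a+2)} = (C\oplus U)^{(2(a+1))}$, which is exactly the required bound --- now with no slack left over. In other words, the ``factor $2$'' you mention does not merely \emph{comfortably absorb} a single jump per level; it absorbs two, and both are actually used.
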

\begin{proof}
For $a=1$, the machine $f_1$ needs to ask the $\Pi^0_2(C\oplus U)$ oracle whether 
$\forall n\exists k\geq n (C(k)=0\land U(k))$. If the answer is yes, then $f_1$ computes the set
$C(x)=0\land U(x)$, otherwise it computes the set $C(x)=1\land U(x)$.

Now, let us consider the induction step for $a+1$. Machine $f_{a+1}$ first 
constructs the Erd\H{o}s--Rado tree $T_a$ for the function $\isfunc{C}{[U]^{a+1}}{2}$.
The tree $T_a$ is computable in $C\oplus U$. Then, we can obtain an index for a machine $e_p$ which 
computes the leftmost infinite path $P$ of $T_a$ using a $\Pi^0_2(C\oplus U)$--complete oracle.
Indeed, a sequence $\langle b_0,\dots, b_k\rangle \in P$ if and only if
$$
\forall n\geq k \exists \langle b_{k+1},\dots,b_n\rangle\textrm{\ such that\ } \langle b_0,\dots,b_n\rangle \in T_a
$$
and 
$\exists n \geq k$ such that $\forall \langle b'_0,\dots,b'_k\rangle \leq_{\textrm{lex}}\langle b_0,\dots,b_k\rangle$, 
$\forall \langle b'_{k+1},\dots, b'_{n}\rangle$, the following holds
$$\langle b'_0,\dots, b'_n\rangle \not \in T_a.$$
The crucial property of elements from $P$ is that the color of any $(a+1)$--tuple from $P$ does not 
depend on the last element of the tuple. Thus, if we restrict the domain of the coloring $C$ to $P$, we can treat the coloring
$C$ as a coloring of $a$--tuples. Let us call this restricted coloring $C'$. 
Then, we construct a machine $f_a$ (which may be obtained by inductive hypothesis) and use it with oracle $(C'\oplus P)^{(2a)}$. 
Any infinite $C'$--homogeneous subset of $P$ computed by $f_a$ is also $C$--homogeneous.
Moreover, since $P$ is recursive in $\Pi^0_2(C\oplus U)$,
the complexity of the oracle is $(C\oplus U)^{(2(a+1))}$ as required. This completes the recursive 
construction and the proof of the Lemma.
\end{proof}

\begin{proof}[Proof of Theorem \ref{thm:upperbound}]
Once the machine $f_a$ are constructed as in Lemma \ref{lem:upperbound} we can replace oracles they use by
one oracle $C^{(\omega)}$. 
At each step of the construction machines query only a finite fragments of $C^{(\omega)}$  but to make a construction
uniform we can replace calls to different oracles by calls to $C^{(\omega)}$.

Now, we can replace the $\Sigma^1_1$--induction in the proof of Theorem \ref{thm:rtomega} by first-order induction. 
As in the proof of Theorem \ref{thm:rtomega}, for a coloring $C\colon[\bN]^{!\omega}\rightarrow 2$ we define $C_a(x_1,\dots, x_a)= C(a,x_1,\dots, x_a)$, for $a< x_1<\dots < x_a$. If $f_a$ is a function that computes $C_a$ homogeneous set,
we can refer to this set as the range of $f_a$, ${\rm rg}(f_a)$. We formulate the first order induction in the following form:
for each $n$ there exists a sequence $\set{ (a_i, f_{a_i}) : i\leq n}$ such that for each $i<n$,
\begin{itemize}
\item
$a_0=2$,
\item
${\rm rg}(f_{a_{i+1}}) \subseteq {\rm rg}(f_{a_i}) \subseteq \bN$,
\item
${\rm rg}(f_{a_i})$ is infinite and $C_{a_i}$--homogenous,
\item
$a_{i+1} = \min({\rm rg}(f_{a_i}) \cap \set{ x \in \bN \colon x>a_i})$.
\end{itemize}
The reader may want to compare these conditions with the conditions used in the proof of Theorem \ref{thm:rtomega} (cfr. second column of page 2).
Instead of sets $X_i$ we 
use indexes of machines $f_{a_i}$ computing  $C_{a_i}$--homogeneous sets.
Then, using arithmetical comprehension (which is available since $\forall X \exists Y (Y = X^{(\omega)})$ implies $\ACA_0$ and more) we may carry out the induction and prove  that there exists infinite sequence $\set{(a_i,f_{a_i})}_{i\in\Nat}$ with the above properties. By construction, the set 
the set $\{a_i: i\in \Nat\}$ is $C$--homogeneous.
\end{proof}

Let us observe than we could not carry out the above proof from the assumption $\forall n \RT^n$ 
even though we could perform {\it each step} of the induction. The problem is that we would
not have just one oracle $C^{(\omega)}$ in the whole construction but we could be forced 
to use stronger and stronger oracles at each step. So, the construction could not
be expressed as a single arithmetical formula.

\section{The Regressive Ramsey Theorem for coloring exactly large sets}

In this section we formulate and analyze an analogue of $\RT(!\omega)$ based
on Kanamori-McAloon's principle (also known as the Regressive Ramsey Theorem) \cite{Kan-McA:87}. 
This principle is well-studied (see, e.g., 
\cite{Mil:08,Koj-She:99,Koj-Lee-Omr-Wei:08,Car-Lee-Wei:11}) and is one of the most natural examples 
of a combinatorial statement independent of Peano Arithmetic. 
The idea for studying the analogue principle for colorings of exactly large sets
came from the analysis of the proof of Proposition 4.4 in \cite{Dza-Hir:11}. 
The natural way of glueing together the colorings used in that proof gives rise to a 
regressive function on exactly large sets. 

To state the Regressive Ramsey Theorem we need a bit of terminology. A coloring $C$ is 
called {\it regressive} if for every $S\subseteq \Nat$ of the appropriate type, $C(S)<\min(S)$
whenever $\min(S)>0$. We denote by $\KM^d$ the following statement: For every regressive 
coloring $C:[\Nat]^d\to \Nat$ there exists an infinite $H\subseteq \Nat$ such that 
the color of elements of $[H]^d$ only depends on their minimum, i.e., if $s,s'\in [H]^d$
are such that $\min(s)=\min(s')$ then $C(s)=C(s')$. A set such as $H$ is called {\it min-homogeneous}
for $C$. 

We formulate a natural infinite version of the Regressive Ramsey Theorem for coloring large 
sets as follows. For every regressive coloring $C:[\Nat]^{!\omega}\to \Nat$, for every 
infinite $M\subseteq \Nat$, there exists an infinite $H\subseteq M$ that is min-homogeneous
for $C$. We denote this statement by $\KM(!\omega)$. A combinatorial proof of $\KM(!\omega)$ 
can be given along exactly the same lines as the proof 
of $\RT(!\omega)$ in Theorem \ref{thm:rtomega} above.

In fact, as we now prove, $\KM(!\omega)$ is equivalent to $\RT(!\omega)$ over $\RCA_0$. 

\begin{proposition}
Over $\RCA_0$, $\KM(!\omega)$ and $\RT(!\omega)$ are equivalent.
\end{proposition}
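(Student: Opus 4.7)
The plan is to argue the two implications separately, since they have quite different flavours. For $\KM(!\omega)\Rightarrow \RT(!\omega)$, the key observation is that any two-valued coloring is automatically regressive once its domain is restricted to exactly large sets whose minimum is at least $2$. Given $C\colon[\Nat]^{!\omega}\to 2$ and an infinite $M\subseteq\Nat$, I would extend $C$ to a regressive coloring $\widetilde{C}$ by setting $\widetilde{C}(s)=C(s)$ when $\min s\geq 2$ and $\widetilde{C}(s)=0$ otherwise. An application of $\KM(!\omega)$ to $(M,\widetilde{C})$ yields an infinite $H\subseteq M$ that is min-homogeneous for $\widetilde{C}$; after discarding at most two elements we may assume $\min H\geq 2$, so that $\widetilde{C}$ and $C$ agree on $[H]^{!\omega}$. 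The induced map $g\colon H\to\{0,1\}$ sending $n$ to the common color of $\{s\in[H]^{!\omega}:\min s=n\}$ is a 2-coloring of the infinite set $H$; a single application of the Infinite Pigeonhole Principle (provable in $\RCA_0$) returns an infinite $H'\subseteq H$ on which $g$ is constant, and such an $H'$ is $C$-homogeneous.

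For the converse $\RT(!\omega)\Rightarrow \KM(!\omega)$, I would invoke Theorem~\ref{thm:main} to work inside $\ACA_0^+$ and adapt the stage-by-stage argument used to prove Theorem~\ref{thm:rtomega}. Given a regressive $C\colon[\Nat]^{!\omega}\to\Nat$ and an infinite $M$, consider the slices $C_a(x_1,\dots,x_a)=C(a,x_1,\dots,x_a)$, which are classical regressive colorings of $a$-tuples (their values lie below $a$, hence below the minimum of any admissible tuple). At stage $i$, starting from $a_0=\min M$ and $X_{-1}=M$, I would apply the classical Kanamori--McAloon Theorem $\KM^{a_i}$ (provable in $\ACA_0$) to $C_{a_i}$ inside $X_{i-1}$, obtaining an infinite min-homogeneous $Y$; the induced ``colour-of-minimum'' function $y\mapsto C_{a_i}(y,y_2,\dots,y_{a_i})$ takes fewer than $a_i$ values, so one further application of the Infinite Pigeonhole Principle for $a_i$ colors produces an infinite $X_i\subseteq Y$ on which $C_{a_i}$ is outright constant, and I would set $a_{i+1}=\min X_i$. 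The final set $L=\{a_0,a_1,\dots\}$ is then min-homogeneous for $C$: for any $s\in[L]^{!\omega}$ with $\min s=a_i$, the remaining $a_i$ elements of $s$ lie in $X_i$, so $C(s)=C_{a_i}(s\setminus\{a_i\})$ is the constant value of $C_{a_i}$ on $[X_i]^{a_i}$, depending only on $a_i$.

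The hard part will be carrying this $\omega$-stage iteration out inside a first-order system, since on its face the argument uses $\Sigma^1_1$-induction. Here I would follow the strategy of the proof of Theorem~\ref{thm:upperbound}: replace the sets $X_i$ by indices of Turing machines computing them uniformly from the oracle $C^{(\omega)}$, whose existence is guaranteed by $\ACA_0^+$. Arithmetical comprehension relative to $C^{(\omega)}$ (available in $\ACA_0^+$) then supplies the sequence $\{(a_i,X_i)\}_{i\in\Nat}$ via a genuine first-order induction on $i$, and $L$ is defined by primitive recursion from this sequence.
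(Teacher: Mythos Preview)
Your argument for $\KM(!\omega)\Rightarrow\RT(!\omega)$ is essentially identical to the paper's: both observe that a $2$-coloring is regressive above $2$, apply $\KM(!\omega)$, then thin by the Infinite Pigeonhole Principle.

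For $\RT(!\omega)\Rightarrow\KM(!\omega)$ you take a genuinely different route. The paper gives a direct combinatorial reduction entirely inside $\RCA_0$: from the regressive $C$ it defines a $2$-coloring $C'$ of exactly large sets by setting $C'(\{a_0,\dots,a_{a_0}\})=1$ iff all exactly large subsets of $\{a_0-1,\dots,a_{a_0}-1\}$ with minimum $a_0-1$ receive the same $C$-color; one then argues that if $X$ is $C'$-homogeneous, the shifted set $\{x-1:x\in X\}$ is min-homogeneous for $C$. No appeal to jumps or to Theorem~\ref{thm:main} is made. Your argument instead passes through the equivalence $\RT(!\omega)\Leftrightarrow\ACA_0^+$ and then re-runs the iterated-$\KM^{a_i}$ construction inside $\ACA_0^+$, formalizing it exactly as in Theorem~\ref{thm:upperbound}; in effect you are reproving Theorem~\ref{thm:KMupperbound} (which the paper states without proof) and combining it with Theorem~\ref{thm:main}. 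This is correct, and your sketch of how to avoid $\Sigma^1_1$-induction by coding the $X_i$ as machine indices over the oracle $C^{(\omega)}$ is sound. The trade-off is clear: the paper's reduction is elementary and self-contained, exhibiting a direct combinatorial link between the two principles that does not depend on the computability-theoretic analysis, whereas your route is heavier (it needs both halves of the main theorem) but is perhaps more transparent once that machinery is in place, and it has the side benefit of supplying the proof of Theorem~\ref{thm:KMupperbound} that the paper omits.
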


\begin{proof}
We first prove that $\KM(!\omega)$ implies $\RT(!\omega)$. This is almost trivial. Let 
$C:[\Nat]^{!\omega}\to 2$ be given. Then $C$ is regressive on $[\Nat \setminus\{0,1\}]^{!\omega}$. Let $H$ be an infinite
min-homogeneous set for $C$. Define $C':[H]\to 2$ as follows. $C'(h)=i$ if 
all exactly large sets in $H$ with minimum $h$ have color $i$. By the Infinite Pigeonhole
Principle, let $H'\subseteq H$ be an infinite $C'$-homogeneous set. Then $H'$ is 
$C$-homogeneous. 

Now, we  prove that $\RT(!\omega)$ implies $\KM(!\omega)$. Let $C:[\Nat]^{!\omega}\to \Nat$
be a regressive coloring. We define $C':[\Nat]^{!\omega}\to \{0,1\}$ in  such  a way that 
if $X$ is an infinite $C'$--homogenous set then
$Y=\set{ x-1 \colon x\in X}$  is $\min$--homogenous for $C$.
For a tuple  $A=(a_0, \dots, a_k)\in [\bN]^{k+1}$, where $a_0\geq 1$ and $k=a_0$,
we define $C'(A)$ as $1$ if  all tuples  $(a_0 -1, c_1, \dots, c_{k-1}) \in [\set{a_i - 1 \colon 0\leq i \leq k}]^{a_0)}$
get the same color under $C$. Otherwise, we define $C'(A)$ as $0$. (We define $C'((0))$ arbitrarily.)
It is easy to prove by $\RCA_0$ induction that for each infinite $C'$--homogenous set $X$
has the stated above property.
\end{proof}

It is instructive to observe how the proof of Proposition \ref{prop:hardcoloring} goes through 
almost unchanged. The details diverge from the proof of Proposition 4.4. in \cite{Dza-Hir:11}
in a different point.

\begin{proposition}\label{prop:KMhardcoloring}
For every set $X$ there exists a computable regressive coloring $C^X:[\Nat]^{!\omega}\to 2$ such that
if $H\subseteq 2\Nat$ is an infinite min-homogeneous set for $C$ then
$H$ computes $X^{(n-1)}$ for every $2n\in H$. 
\end{proposition}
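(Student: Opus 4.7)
The plan is to reuse, essentially verbatim, the coloring $C^X$ constructed in Proposition \ref{prop:hardcoloring}. Since $C^X$ takes values in $\{0,1\}$, it is automatically regressive on every exactly large set with minimum $\geq 2$; in particular its restriction to the exactly large subsets of $M = [2,\infty)\cap 2\Nat$ is a genuine regressive coloring, to which $\KM(!\omega)$ may be applied.

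Given an infinite min-homogeneous set $H \subseteq 2\Nat$ for $C^X$, denote by $c_n \in \{0,1\}$ the common color of all exactly large subsets of $H$ with minimum $2n$, for each $2n \in H$. The second half of the proof of Proposition \ref{prop:hardcoloring} --- the part that recovers $X^{(n-1)}$ from the coloring via recursive comprehension --- only inspects the color of exactly large subsets of $H$ with a \emph{fixed} minimum $2n$. It therefore transfers word-for-word to the present setting to show that, whenever $c_n = 0$, the set $H$ computes $X^{(n-1)}$.

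What remains is to verify that $c_n = 0$ for every $2n \in H$, and this is where the argument diverges from the one in \cite{Dza-Hir:11}. Suppose for contradiction that $c_n = 1$. Define exactly as in Proposition \ref{prop:hardcoloring} the auxiliary coloring of $[H\cap (2n,\infty)]^{2n}$ that sends $B = \{b_1,\dots,b_{2n}\}$ to the least $i \leq n$ witnessing the relevant disagreement clause. The crucial point --- and the main obstacle to making the adaptation work --- is that this auxiliary coloring takes values in $\{1,\dots,n\}$ while acting on $2n$-tuples whose minimum exceeds $2n > n$; it is therefore regressive. In place of the appeal to $\RT^{2n}_n$ made in \cite{Dza-Hir:11} we apply the finite Kanamori-McAloon theorem $\KM^{2n}$ to obtain an infinite min-homogeneous set $H'$ for the auxiliary coloring, and a single use of the infinite pigeonhole principle over the $n$-bounded set of possible colors of minima produces an infinite truly homogeneous subset $H'' \subseteq H'$. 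On $H''$ the Dzhafarov-Hirst contradiction proceeds unchanged.

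Apart from this substitution of $\KM^{2n}$ for $\RT^{2n}_n$ --- which is forced on us, but also made available to us, by the fact that the auxiliary coloring is regressive --- every step of the proof of Proposition \ref{prop:hardcoloring} is borrowed intact.
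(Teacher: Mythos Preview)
Your approach is correct but differs from the paper's in an instructive way. You reuse the $\{0,1\}$-valued coloring from Proposition~\ref{prop:hardcoloring} unchanged and observe that it is trivially regressive; the paper instead \emph{modifies} the coloring so that $C^X(A)$ records the least witnessing index $i\in[1,n]$ (and $0$ if none exists), making it a genuine $\Nat$-valued regressive function. With the paper's coloring, min-homogeneity of $H$ already pins down the value $i$ for every minimum $2n$, so the first auxiliary Ramsey step (finding $i$) is absorbed directly into the hypothesis; only the further step of isolating a single witnessing pair $(m_0,e_0)$ still requires an external Ramsey application. With your coloring, min-homogeneity only tells you that \emph{some} $i$ exists, so you must recover $i$ by a separate Ramsey/Kanamori--McAloon pass, exactly as in Proposition~\ref{prop:hardcoloring}. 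Both routes reach the same contradiction and the second half (computing $X^{(n-1)}$) is, as you note, identical since it only inspects sets with the fixed minimum $2n$.

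One small correction of emphasis: nothing \emph{forces} you to replace $\RT^{2n}_n$ by $\KM^{2n}$. The auxiliary coloring is bounded by $n$, so $\RT^{2n}_n$ applies directly; your detour through $\KM^{2n}$ followed by pigeonhole simply reproduces $\RT^{2n}_n$ for bounded colorings. The substitution is harmless but not the point of divergence. What your route buys is literal reuse of the earlier construction; what the paper's route buys is a cleaner illustration of how the extra colour-space available to a regressive function can do real work, letting min-homogeneity replace one Ramsey application outright. (Incidentally, your version better matches the codomain ``$\to 2$'' written in the statement, which is evidently a typo for $\to\Nat$ given the paper's own proof.)
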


\begin{proof}
Let $X$ be a set. Define $C^X:[\Nat]^{!\omega}\to \Nat$ as follows. 

Let $A = \{a_0,\dots,a_p\}$ be exactly large, 
i.e., $a_0 = p$. 

If $a_0 = 2n$ for some $n$ then $A=\{2n,a_1,\dots,a_n,a_{n+1},\dots,a_{2n}\}$. 
Let $C^X(A)$ be the least $i\in [1,n]$ 
such that there exists $(m,e) < a_{n-i}$ such that 
$$ \neg ((m,e) \in X^i_{a_n,\dots,a_{n-i+1}} \pimp (m,e) \in X^i_{a_{2n},\dots,a_{2n-i+1}})$$
if such an $i$ exists, and $C^X(A)=0$ otherwise. If $a_0 = 2n+1$ then $C^X(A)=0$ 
(the value is irrelevant in this case). Note that $C^X$ is a regressive coloring. 
Let $H$ be an infinite min-homogeneous set for $C^X$ as given by $\KM(!\omega)$ applied to $C^X$ 
and $M=[2,\infty)\cap 2\Nat$. 

\medskip
\begin{sloppypar}
We first claim that the color of $C^X$ restricted to $[H]^{!\omega}$ is $0$ and 
$H$ is indeed homogeneous. 
Suppose otherwise by way of contradiction. Let $i>0$ be such that for some 
$n$, $A=\{2n,a_1,\dots,a_n,a_{n+1},\dots,a_{2n}\}\in [H]^{!\omega}$ and $C^X(A)=i$. 
Note that $i \leq n$ and that the color is $i$ for every exactly large set in $H$
with minimum $2n$. Let $H=\{2n_j\}_{j\in J}$ for some $J$. Let $n=n_{j}$. 
Let $h=n_{j+n-i}$. 
We claim that there exists $(m_0,e_0) < 2h$ such that for all $B\in [H\cap (2n,\infty)]^{2n}$
$$ \neg ((m_0,e_0)\in X^i_{b_n,\dots,b_{n-i+1}} \pimp (m_0,e_0)\in X^i_{b_{2n},\dots,b_{2n-i+1}}) $$
where $B=\{b_1,\dots,b_n,b_{n+1},\dots,b_{2n}\}$. We get the existence of $(m_0,e_0)$
by coloring $[H\cap (2n,\infty)]^{2n}$ according to the least $(m,e) < 2h$ witnessing the color is 
$i$ (i.e., by an application of a finite Ramsey Theorem of suitable dimension). 
\end{sloppypar}

Fix such a $B$. By minimality of $i$ it must be the case that $X^i_{b_n,\dots,b_{n-i+2}}$
agrees with $X^i_{b_{2n},\dots,b_{2n-i+2}}$ on values below $b_{n-i+1}$. Therefore 
$$(m_0,e_0)\in X^i_{b_n,\dots,b_{n-i+1}} \rightarrow (m_0,e_0)\in X^i_{b_{2n},\dots,b_{2n-i+1}},$$ 
since $b_{n-i+1} < b_{2n-i+1}$. Then by choice of $(m_0,e_0)$ the converse implication must fail. 
Therefore $(m_0,e_0)\in X^{i}_{b_{2n},\dots,b_{2n-i+1}}$ holds unconditionally. Thus, 
$$(\exists t < b_{2n-i+1})(m_0\in W_{e_0,t}^{X^{i-1}_{b_{2n},\dots,b_{2n-i+2}}}).$$
Choose $(b_1^*,\dots,b_n^*,b_{n+1}^*,\dots,b_{2n}^*)$ in $[H\cap (2n,\infty)]^{2n}$
with $b_{n-i+1}^* > b_{2n-i+1}$ and $b_{2n-i+2}^* \geq b_{2n-i+2}$. By the same argument
as above applied to the sequence $(b_1,\dots,b_{2n-i+1},b_{2n-i+2}^*,\dots,b_{2n}^*)$
we have that 
$$(\exists t < b_{2n-i+1})(m_0\in W_{e_0,t}^{X^{i-1}_{b_{2n}^*,\dots,b_{2n-i+2}^*}}).$$
But by minimality of $i$ we have that $X^{i-1}_{b_n^*,\dots,b_{n-i+2}^*}$ and $X^{i-1}_{b_{2n}^*,\dots,
b_{2n-i+2}^*}$ must agree on 
all elements below $b_{n-i+1}^*$ and therefore also on all elements below $b_{2n-i+1}$. 
But $b_{2n-i+1}$ bounds the use of the computation showing $m_0\in W_{e_0}^{X^{i-1}}$
and since $b_{2n-i+1} < b_{n-i+1}^*$ we have that
$$ (\exists t < b_{n+i-1}^*)(m_0\in W_{e_0,t}^{X^{i-1}_{b_{n}^*,\dots,b_{n-i+2}^*}}),$$
and on the other hand, since $b_{2n-i+2}^* \geq b_{2n-i+2}$, we have that
$$ (\exists t < b_{2n+i-1}^*)(m_0\in W_{e_0,t}^{X^{i-1}_{b_{2n}^*,\dots,b_{2n-i+2}^*}}).$$
But these two facts contradict the choice of $(m_0,e_0)$. 

\medskip
We then claim that for every $h\in H$, $X^n$ is computable in $H$, where $h = 2n$. 
Since $H$ is homogeneous of color $0$, the argument goes through unchanged as in the proof of Proposition 
\ref{prop:hardcoloring}. 
\end{proof}

We next observe without proof that an analogue of Proposition \ref{thm:upperbound} holds for $\KM(!\omega)$. 
The proof is similar to that of Theorem \ref{thm:upperbound}.

\begin{theorem}\label{thm:KMupperbound}
$\forall X \exists Y (Y = X^{(\omega)})$ implies $\KM(!\omega)$ over $\RCA_0$.
\end{theorem}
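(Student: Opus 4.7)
The plan is to imitate the proof of Theorem \ref{thm:upperbound} with minor adaptations to the regressive setting; alternatively, one could combine Theorem \ref{thm:upperbound} with the preceding Proposition (equivalence of $\KM(!\omega)$ and $\RT(!\omega)$ over $\RCA_0$) to get the result with no further work, but the direct route makes the parallel between the two principles transparent and mirrors exactly the structure of Theorem \ref{thm:upperbound}.

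First I would establish a regressive analogue of Lemma \ref{lem:upperbound}: for each $a \geq 1$ and each regressive coloring $\isfunc{C}{[U]^a}{\Nat}$ one can effectively produce a Turing machine $g_a$ with oracle $(C\oplus U)^{(k(a))}$, for some primitive-recursive bound $k(a)$, that computes an infinite $C$-min-homogeneous subset of $U$. The base case $a=1$ is trivial, since any infinite subset of $U$ is automatically min-homogeneous for a coloring of singletons. For the inductive step I would adapt the Erdős-Rado tree to the regressive context: at each node of length $\ell > n$ pick the least admissible $t(n)$ so that, for all $m_1 < \dots < m_a < m \leq n$, the values $C(t(m_1),\dots,t(m_a),j)$ stabilize; regressivity bounds such colors below $t(m_1)$, so the tree remains finitely branching, and its leftmost infinite path $P$ is computable in a fixed finite jump of $C\oplus U$. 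Along $P$, the $a$-ary coloring $C$ restricted to $P$ is essentially a regressive coloring of $(a-1)$-tuples (the last coordinate being irrelevant), so we can invoke the inductive hypothesis and obtain $g_a$.

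With the machines $g_a$ available, we then conclude exactly as in the proof of Theorem \ref{thm:upperbound}. Given a regressive $\isfunc{C}{[\Nat]^{!\omega}}{\Nat}$ and assuming the existence of $C^{(\omega)}$, we replace the finitely many oracles used by the various $g_a$ by the single oracle $C^{(\omega)}$. We then run the construction from the combinatorial proof of Theorem \ref{thm:rtomega}, but with the index of $g_{a_i}$ (applied to $C_{a_i}(x_1,\dots,x_{a_i}) = C(a_i,x_1,\dots,x_{a_i})$) in the role of the set $X_i$. The resulting sequence $\{(a_i, g_{a_i})\}_{i \in \Nat}$ is now defined by a first-order induction over indices of Turing machines, which is available by arithmetical comprehension (entailed by our hypothesis). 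The set $\{a_i : i \in \Nat\}$ is the desired infinite $C$-min-homogeneous set.

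The main obstacle is setting up the regressive Erdős-Rado tree correctly. In the $2$-coloring case of Lemma \ref{lem:upperbound}, finite branching and path extraction are built into the classical Erdős-Rado construction; in the regressive case one must use the regressivity bound $C(t)<\min(t)$ to cap the number of relevant colors below any fixed head element and to argue that the leftmost path is extractable by a $\Pi^0_2$ (relative to the appropriate join) query on the tree, so that the oracle complexity stays within $(C \oplus U)^{(k(a))}$ for a recursively defined $k(a)$. Once this is in place, the passage from the individual finite oracles to the single $C^{(\omega)}$ oracle, and the replacement of $\Sigma^1_1$-induction by first-order induction, are verbatim copies of the corresponding steps in the proof of Theorem \ref{thm:upperbound}.
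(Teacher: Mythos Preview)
Your proposal is correct and matches the paper exactly: the paper gives no proof of this theorem, merely remarking that ``the proof is similar to that of Theorem~\ref{thm:upperbound},'' which is precisely the direct route you sketch (regressive Erd\H{o}s--Rado tree, inductive construction of the machines $g_a$, replacement of $\Sigma^1_1$-induction by first-order induction using $C^{(\omega)}$ as a single oracle). Your alternative observation---that the result follows immediately from Theorem~\ref{thm:upperbound} together with the preceding Proposition establishing the equivalence of $\KM(!\omega)$ and $\RT(!\omega)$ over $\RCA_0$---is a valid and even quicker shortcut that the paper does not spell out; note only the minor indexing slip in your Erd\H{o}s--Rado description (for $C$ on $a$-tuples the stabilization condition should range over $m_1<\dots<m_{a-1}<m\leq n$, not $m_1<\dots<m_a<m\leq n$).
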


\section{Peano Arithmetic with $\omega$ inductive truth predicates}

In this section we compare the strength of $\RT(!\omega)$ with Peano Arithmetic 
augmented by a hierarchy of truth predicates. We establish a close correspondence between these theories. 

Let $\alpha$ be an ordinal and let $\PA(\set{\Tr_\beta: \beta<\alpha})$ 
be Peano arithmetic extended by axioms which express, for each $\beta<\alpha$, that 
$\Tr_\beta(x)$ is a truth predicate for the language with predicates $\Tr_\gamma$, for $\gamma<\beta$
and with full induction in the extended language.
The axioms for being a truth predicate for a language $L_\beta$ are the usual Tarski condition for 
compositional definitions of the truth values for connectives and quantifiers. They may be presented as
follows.  Let $L_n$ be a language with truth predicates $\Tr_{0}, \dots, \Tr_{n-1}$. Then, for each $n\in\bN$ we put in
$\PA(\set{\Tr_\beta: \beta<\omega})$ 
\begin{itemize}
\item 
$\forall (t=t')\in L_n (\Tr_n(t=t') \equiv {\rm val}(t) = {\rm val}(t'))$,
\item 
$\forall (t\leq t')\in L_n (\Tr_n(t\leq t') \equiv {\rm val}(t) \leq  {\rm val}(t'))$,
\item for all $i<n$ we have
$\forall x (\Tr_n(\Tr_i(\bar{x})) \equiv \Tr_i(x))$,
\item
$\forall \vp\in L_n (\Tr_n(\neg\vp) \equiv \neg \Tr_n(\vp))$,
\item 
$\forall \vp,\psi \in L_n (\Tr_n(\vp\imp \psi ) \equiv  (\Tr_n(\vp)\imp \Tr_n(\psi)))$,
\item 
$\forall \vp (\Tr_n(\exists x \vp(x)) \equiv \exists x\,\Tr_n(\vp(\bar{x})))$,
\end{itemize}
where $\bar{x}$ is the $x$-th numeral which is a name for an element $x$ in the model, $t$ and $t'$ are
closed terms and ${\rm val}$ is an arithmetical function which computes a value of a closed term.
The laws for other propositional connectives and for the universal quantifier may be easily proved
from the above axioms. For more on theories with truth predicates, also called satisfaction classes we refer to
\cite{KR:90a} and \cite{KR:90b}. 
\begin{theorem}
The following theories are equivalent over the language of Peano arithmetic:
\begin{enumerate}
\item
$\RCA_0 + \RT(!\omega)$,
\item
$\PA(\{\Tr_i: i\in\Nat\})$.
\end{enumerate}
\end{theorem}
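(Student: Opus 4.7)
The plan is to establish the arithmetical equivalence by interpreting each theory in the other. For $(1)\Rightarrow(2)$, working in $\RCA_0+\RT(!\omega)$, equivalently $\ACA_0^+$ by Theorem \ref{thm:main}, I would define sets $T_0,T_1,T_2,\dots$ by iterated $\omega$-jumps: set $T_0:=\emptyset^{(\omega)}$ and $T_{n+1}:=T_n^{(\omega)}$, each available by the axiom $\forall X\exists Y(Y=X^{(\omega)})$. Interpreting $\Tr_n$ as $T_n$, one verifies from the standard Tarski-style computation for jumps that the compositional clauses of $\PA(\{\Tr_i\})$ for $\Tr_n$ hold of $T_n$. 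Any instance of the induction scheme of $\PA(\{\Tr_i\})$ applied to a formula in the language $L_m$ translates into arithmetic induction with set parameters $T_0,\dots,T_{m-1}$, which is available in $\ACA_0$. Hence any arithmetic sentence provable in $\PA(\{\Tr_i\})$ is provable in $\RCA_0+\RT(!\omega)$.

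For $(2)\Rightarrow(1)$, I would show that every arithmetical consequence of $\ACA_0^+$ is provable in $\PA(\{\Tr_i\})$ via a schematic interpretation that replaces set variables by formulas of the extended language with one distinguished free variable, reading $x\in X$ as $\varphi_X(x)$. Under this interpretation, the axioms of $\RCA_0$ hold since $\PA(\{\Tr_i\})$ provides full induction in the extended language, together with $\Sigma^0_1$ comprehension by definition of the interpretation. The key step is to verify that $\forall X\exists Y(Y=X^{(\omega)})$ holds schematically: given a formula $\varphi_X$ using predicates $\Tr_0,\dots,\Tr_{k-1}$, the $\omega$-jump $X^{(\omega)}$ is definable by a formula using $\Tr_k$, since $\Tr_k$ is a truth predicate for the arithmetic language augmented by $\Tr_0,\dots,\Tr_{k-1}$, which uniformly captures all finite jumps of $X$.

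The main obstacle is the bookkeeping of truth-predicate levels in direction $(2)\Rightarrow(1)$ and the verification of the Tarski clauses from the jump construction in direction $(1)\Rightarrow(2)$. Both are manageable once formulated correctly: each formula of the extended language uses only finitely many $\Tr_i$, so climbing one level in the hierarchy always suffices to witness the next $\omega$-jump; and the Tarski clauses unfold directly from the inductive definition of iterated jumps. Full induction in the extended language is essential in both directions: in $(1)\Rightarrow(2)$ it translates into arithmetic induction with set parameters, available in $\ACA_0^+$, and in $(2)\Rightarrow(1)$ it supplies the strong induction required to formalize (under the interpretation) the upper bound proof of $\RT(!\omega)$ from Theorem \ref{thm:upperbound}.
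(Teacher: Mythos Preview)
Your approach for $(1)\Rightarrow(2)$ is essentially the paper's: both interpret the truth predicates via iterated $\omega$-jumps $0^{(\omega)},0^{(\omega\cdot 2)},\dots$ and verify the Tarski clauses and induction. (One cosmetic point: the $\omega$-jump is not literally the truth set but only Turing-equivalent to it, so $\Tr_n$ should be a formula with parameter $T_n$ rather than membership in $T_n$ itself; both you and the paper elide this.)

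For $(2)\Rightarrow(1)$ your route genuinely differs. The paper argues model-theoretically: given any $M\models\PA(\{\Tr_i\})$, expand $M$ to a second-order structure by taking as sets the union over $i$ of all sets $\Delta^0_1$-definable from $\Tr_0,\dots,\Tr_i$; this collection is visibly closed under $\omega$-jumps, so the expansion satisfies $\ACA_0^+$ with the same first-order part, and conservativity follows by completeness. Your syntactic route via a schematic interpretation is natural in spirit, but it glosses over a real obstacle: $\ACA_0^+$ has genuine second-order quantifiers, not schemata, and a proof of an arithmetic $\sigma$ may use $\forall X$ and $\exists Y$ as object-language quantifiers. To push a schematic translation through you would need either a cut-elimination or normal-form argument bounding the nesting of $\omega$-jump applications (so that a fixed finite stock of $\Tr_i$ suffices for the whole proof), or else an \emph{internal} interpretation of set variables as formula codes; but the latter requires expressing ``$x$ satisfies the formula coded by $X$'' uniformly over all levels, which $\PA(\{\Tr_i\})$ cannot do directly since there is no single predicate indexing the whole hierarchy. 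The paper's model construction sidesteps exactly this bookkeeping, which is what it buys you.
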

\begin{proof}
For the direction from $1.$ to $2.$, we use the fact that the truth for arithmetical formulas with second order
 parameters say $P_0, \dots, P_n$ is many--one reducible
to the the $\omega$-jump of $P_1\oplus\cdots \oplus P_n = \set{(i,j)\in\bN^2 \colon j\in P_i}$.
Now, if $d$ is a proof in $\PA(\{\Tr_i: i\in\Nat\})$
then it uses only finitely many truth predicates, say $T_1,\dots, T_n$. We can define them using
$0^{(\omega)},\dots, 0^{(\omega n)}$ and carry out the proof in $\RCA_0 + \RT(!\omega)$ proving 
the axioms for truth theory of these $\Tr_1,\dots, \Tr_n$.

For the other direction, if $M\models \PA(\{\Tr_i: i\in\Nat\})$ then we can extend
$M$ to a model of $\RCA_0 + \RT(!\omega)$ without changing its first-order part. 
We simply construct a sequence of models 
$M_i$, for $i\in\Nat$ as follows. As $M_0$ we take just $M$ and for $M_{i+1}$ we take all 
sets which are $\Delta^0_1$-definable from the language with truth predicates $\Tr_0,\dots, \Tr_i$.
The sum of all $M_i$ is obviously closed on $\omega$ jumps since it is closed on taking arithmetical truth
for each sequence of second order parameters $P_0,\dots, P_n$.
It follows that such obtained model satisfies $\RCA_0 + \RT(!\omega)$ and since the first-order part 
of both models is the same we get conservativity in the language of $\PA$.
\end{proof}

In \cite{KR:90b} the authors characterize the arithmetical strength of Peano arithmetic with one 
predicate axiomatized as a truth predicate and with induction for the full language. 
Let $\alpha$ be an ordinal. We define
 $\omega_0(\alpha)=\alpha$ and let $\omega_{k+1}(\alpha)=\omega^{\omega_k(\alpha)}$.
Now, for an ordinal $\alpha$ let $\varepsilon_\alpha$ be the $\alpha$th ordinal $\beta$ with 
the property $\omega^\beta=\beta$. Thus, \emph{the first} such ordinal, $\varepsilon_0$, 
is the limit of $\omega_k(0)$  and $\varepsilon_{\alpha+1}$ is the limit 
of $\omega_k(\varepsilon_\alpha +1)$, where $k\in\Nat$. For limit $\lambda$, one may prove that
$\varepsilon_{\lambda}$ is the limit of $\varepsilon_{\lambda_k}$, where $\lambda_k$ is 
a sequence of ordinals converging to $\lambda$. Of course, in order to define such ordinals 
in arithmetic one needs to define also a coding system which would represent such ordinals
as natural numbers. After representing the ordering up to $\alpha$ in arithmetic, one can 
define the principle of transfinite induction up to $\alpha$, $\TI(\alpha)$.

In \cite{KR:90b} the following theorem is proved.

\begin{theorem}[\cite{KR:90b}]
The arithmetical consequences of $\PA(\Tr_0)$ are exactly the consequences of the theory 
$\PA+ \{ \TI(\varepsilon_{\omega_k(0)})\colon k\in\Nat\}$.
\end{theorem}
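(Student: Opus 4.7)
The plan is to follow the standard pattern of a Gentzen-Schütte proof-theoretic ordinal analysis, splitting the equivalence into a lower bound and an upper bound. For the lower bound I would show that $\PA(\Tr_0) \vdash \TI(\varepsilon_{\omega_k(0)})$ for every external $k \in \Nat$. For the upper bound I would show that every arithmetical consequence of $\PA(\Tr_0)$ is derivable in $\PA + \{\TI(\varepsilon_{\omega_k(0)}) : k \in \Nat\}$.

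For the lower bound I would argue by external induction on $k$. The base case is the classical Gentzen fact that $\PA$ already proves $\TI(\alpha)$ for each $\alpha < \varepsilon_0$. For the inductive step the key observation is that the Tarski-compositional axioms for $\Tr_0$ together with full induction in the extended language yield the uniform reflection schema over $\PA$ (and more generally over $\PA + \TI(\alpha)$ once the previous step has been reached): one uses $\Tr_0$ to state $\forall n \, \Tr_0(\vp(\bar{n}))$ internally and full induction to validate it. By a standard Schmerl-style argument, uniform reflection suffices to pass from $\TI(\alpha)$ to $\TI(\varepsilon_\alpha)$, and iterating $k$ times starting from ordinals below $\varepsilon_0$ yields $\TI(\varepsilon_{\omega_k(0)})$.

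For the upper bound I would embed proofs of $\PA(\Tr_0)$ into an infinitary Tait-style calculus for which cut-elimination provides an explicit ordinal assignment. Given a proof $d$ in $\PA(\Tr_0)$ of an arithmetical sentence $\vp$, only finitely many induction axioms are used and all occur on formulas of bounded complexity in the extended language; let $k$ be a sufficient syntactic bound measuring the nesting of $\Tr_0$ and the quantifier alternation. Inside $\PA + \TI(\varepsilon_{\omega_k(0)})$ one can define a partial truth predicate $\Tr_0^{(k)}$ for formulas of complexity at most $k$ and verify the axioms it is required to satisfy, where the ordinal $\varepsilon_{\omega_k(0)}$ is precisely what is needed to transfinitely verify the induction axioms actually used in $d$. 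Replacing $\Tr_0$ by $\Tr_0^{(k)}$ then turns $d$ into a proof of $\vp$ in $\PA + \TI(\varepsilon_{\omega_k(0)})$.

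The central obstacle is the quantitative ordinal bookkeeping in the upper bound: one must check that interpreting formulas with nested occurrences of $\Tr_0$ genuinely corresponds to the tower $\omega_k(0)$ rather than a sub- or super-ordinal, and that the matching lower-bound iteration saturates this bound. Both sides are really two readings of the same cut-elimination profile for the infinitary calculus attached to $\PA(\Tr_0)$; showing that the assignment on the embedding side and the reduction on the elimination side meet exactly at $\varepsilon_{\omega_k(0)}$ is the technically delicate heart of the argument. By comparison, once uniform reflection has been extracted from the truth axioms the lower bound is a relatively clean iteration.
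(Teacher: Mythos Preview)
The paper does not give its own proof of this theorem: it is quoted from Kotlarski--Ratajczyk \cite{KR:90b} and simply stated as background before the authors derive their corollary about $\PA(\{\Tr_i : i\in\Nat\})$. So there is no argument in the paper to compare your proposal against.

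Your outline is a plausible high-level description of how such ordinal analyses proceed, but as written it is a plan rather than a proof, and a couple of points would need sharpening before it could be carried out. On the lower-bound side, one truth predicate with full induction does not merely give you one step of uniform reflection that you then iterate $k$ times externally; rather, $\Tr_0$ with full induction internalises reflection uniformly and lets you prove transfinite induction up to each $\varepsilon_{\omega_k(0)}$ in one go, the parameter $k$ tracking the number of applications of induction in the extended language. Your inductive phrasing (``iterating $k$ times'') conflates external iteration of reflection with the internal strength of a single satisfaction class. On the upper-bound side, the idea of replacing $\Tr_0$ by a partial truth predicate definable from sufficiently long transfinite induction is the right shape, but the assertion that $\TI(\varepsilon_{\omega_k(0)})$ is ``precisely what is needed'' is exactly the content of the Kotlarski--Ratajczyk analysis and would have to be established, not assumed. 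If you want to fill this in, the reference \cite{KR:90b} is where the actual bookkeeping is done.
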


Our results allows us to characterize the arithmetical strength of Peano arithmetic with $\omega$ many truth predicates.
Let us define a sequence $\alpha_0=\varepsilon_0$ and $\alpha_{k+1}=\varepsilon_{\alpha_k}$, for $k\in \Nat$.
The limit of this sequence is usually denoted by $\varphi_2(0)$ in the Veblen notation system for ordinals.
The proof theoretic ordinal of the theory $\ACA^+_0$ is $\varphi_2(0)$ (see \cite{Rat:91} for a proof). The arithmetical equivalence
of this theory with $\PA(\{\Tr_i\colon i\in\Nat\})$ allows us to characterize the latter theory by transfinite induction.

\begin{theorem}
The arithmetical consequences of $\PA(\{\Tr_i\colon i\in\Nat\})$ are exactly the consequences of the theory 
$\PA+ \{ \TI(\alpha)\colon \alpha < \varphi_2(0) \}$.
\end{theorem}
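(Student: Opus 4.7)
The strategy is to reduce this arithmetical characterization to the proof-theoretic ordinal analysis of $\ACA_0^+$, using the equivalences already established in the paper. First I would apply the preceding theorem to replace $\PA(\{\Tr_i : i\in\Nat\})$ by $\RCA_0 + \RT(!\omega)$ without changing arithmetical consequences; then I would apply Theorem \ref{thm:main} to further replace $\RCA_0 + \RT(!\omega)$ by $\ACA_0^+$. This reduces the target to showing that the arithmetical theorems of $\ACA_0^+$ coincide with those of $\PA + \{\TI(\alpha) : \alpha < \varphi_2(0)\}$, which is then essentially an invocation of Rathjen's analysis.

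For the easy inclusion I would invoke the standard fact that a theory with proof-theoretic ordinal $\kappa$ proves $\TI(\beta)$ over a standard ordinal notation system for every $\beta < \kappa$. Rathjen's identification of $\varphi_2(0)$ as the proof-theoretic ordinal of $\ACA_0^+$ \cite{Rat:91} then gives $\ACA_0^+ \vdash \TI(\alpha)$ for every $\alpha < \varphi_2(0)$, so every arithmetical consequence of $\PA + \{\TI(\alpha) : \alpha < \varphi_2(0)\}$ is a theorem of $\ACA_0^+$, and hence an arithmetical theorem of $\PA(\{\Tr_i : i\in\Nat\})$ by the two equivalences above.

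For the converse inclusion I would invoke the cut-elimination side of Rathjen's analysis: any arithmetical (indeed, $\Pi^1_1$) theorem of $\ACA_0^+$ admits an infinitary derivation of ordinal height less than $\varphi_2(0)$ in a Tait-style system, and such a derivation can be simulated inside $\PA$ once transfinite induction up to the relevant ordinal is available. This exactly parallels the Kotlarski--Ratajczyk treatment of $\PA(\Tr_0)$ cited just above, and fits the general template by which proof-theoretic ordinals control the arithmetical consequences of natural theories. The main obstacle I anticipate is verifying that the ordinal analysis in \cite{Rat:91} yields conservativity for \emph{all} arithmetical sentences rather than a more restricted class such as $\Pi^0_2$; this should follow by inspecting Rathjen's embedding, since arithmetical comprehension introduces no impredicative complications beyond what $\TI$ below $\varphi_2(0)$ already handles, exactly as in the analogue already quoted for $\PA(\Tr_0)$ and the ordinals $\varepsilon_{\omega_k(0)}$.
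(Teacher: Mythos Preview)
Your proposal is correct and follows essentially the same route as the paper: the paper's entire argument is the two sentences preceding the theorem, namely that $\ACA_0^+$ has proof-theoretic ordinal $\varphi_2(0)$ by Rathjen \cite{Rat:91}, and that $\ACA_0^+$ and $\PA(\{\Tr_i : i\in\Nat\})$ have the same arithmetical consequences by the preceding theorem together with Theorem~\ref{thm:main}. You have simply unpacked this sketch into its two directions and flagged the (standard) caveat that one needs Rathjen's analysis to deliver conservativity for all arithmetical sentences, not merely a bound on provable well-orderings; the paper leaves this implicit.
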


\section{Conclusion and Future Research}
We have characterized the effective and the proof-theoretical content of a natural combinatorial Ramsey-type 
theorem due to Pudl\`ak and R\"odl \cite{Pud-Rod:82} and, independently, to Farmaki \cite{Far-Neg:08}.
We have proved that the theorem has computable instances all of whose solutions compute $0^{(\omega)}$, the Turing
degree of arithmetic truth. Moreover, we have shown that the theorem exactly 
captures closure under $\omega$-jump over Computable Mathematics. The theorem 
is interestingly related to Banach space theory because of its equivalent
formulation in terms of Schreier families. 

We now indicate two natural directions for future work on the subject. 

First, we conjecture that our results generalize to the transfinite 
generalizations above $\omega$ of the notions of large set, Schreier family, 
and Turing jump. The notions of $\alpha$-large set, $\alpha$-Schreier family, 
and $\alpha$-Turing jump are all well-defined and studied 
for every countable ordinal (see, respectively, \cite{Ket-Sol:81}, 
\cite{Far-Neg:08}, and \cite{Rog} for definitions). As mentioned
in the introduction, $\RT(!\omega)$ generalizes nicely to colorings of 
$\alpha$-Schreier families, or, equivalently, of exactly $\alpha$-large sets. 
We conjecture that a modification of our arguments will show that, for 
each fixed $\alpha$, the principle $\RT(!\alpha)$ generalizing $\RT(!\omega)$
to colorings of $!\alpha$-large sets is equivalent --- over Computable Mathematics
--- to the closure under the $\alpha$-th Turing jump. Thus, the full theorem 
$\forall \alpha \RT(!\alpha)$ would be equivalent to the system $\ATR_0$ 
(Arithmetical Transfinite Recursion, see \cite{Sim}). Provability in $\ATR_0$ 
can be easily proved by inspection of the proof by Pudl\`ak and R\"odl \cite{Pud-Rod:82}
(using Nash-Williams Theorem) or else by using the $\Sigma^0_1$-Ramsey Theorem.

A second direction for future work is the following. Since $\RT(!\omega)$ is
at least as strong as Ramsey's Theorem it is obviously possible to obtain 
finite independence results for Peano Arithmetic by imposing a suitable largeness
condition (see \cite{DeS:11} for a concrete example). 
A corollary of our results is that $\RT(!\omega)$ implies 
over $\RCA_0$ the well-ordering of the proof-theoretic ordinal of the system $\ACA_0^+$, 
i.e., $\varphi_2(0)$ in Veblen notation.
Using (as of now standard) techniques of miniaturization it is then possible to extract 
from $\RT(!\omega)$ finite first-order independence results in the 
spirit of the Paris-Harrington principle~\cite{Par-Har:77} but for the {\it much} stronger
principle $\ACA_0^+$. 
The hope that finite independence results for systems {\it stronger\/} than Peano Arithmetic 
could be extracted from $(\forall \alpha)\RT(!\alpha)$ is expressed in \cite{Far-Neg:08}. Our results
for $\RT(!\omega)$ confirm this expectation already for $\alpha=\omega$. Details will be reported elsewhere.

\end{document}